\newtheorem{example}[theorem]{Example}
\newtheorem{remark}{Remark}
\newcommand{\V}[1]{{\bm{\mathbf{\MakeLowercase{#1}}}}} 
\newcommand{\M}[1]{{\bm{\mathbf{\MakeUppercase{#1}}}}} 
\newcommand{\Mhat}[1]{{\bm{\hat \mathbf{\MakeUppercase{#1}}}}} 
\newcommand{\T}[1]{\boldsymbol{\mathscr{\MakeUppercase{#1}}}} 
\newcommand{\TA}{\T{A}}
\newcommand{\TC}{\T{C}}
\newcommand{\TX}{\T{X}}
\newcommand{\TG}{\T{G}}
\newcommand{\TT}{\T{T}}
\newcommand{\TP}{\T{P}}
\newcommand{\TTT}{ \T{T}}
\newcommand{\Vu}{\V{u}}
\newcommand{\MH}{\M{H}}
\newcommand{\MG}{\M{G}}
\newcommand{\MZ}{\M{Z}}
\newcommand{\MA}{\M{A}}
\newcommand{\MF}{\M{F}}
\newcommand{\MT}{\M{T}}
\newcommand{\MX}{\M{X}}
\newcommand{\MS}{\M{S}}
\newcommand{\MV}{\M{V}}
\newcommand{\MU}{\M{U}}
\newcommand{\MD}{\M{D}}
\newcommand{\MB}{\M{B}}
\newcommand{\MP}{\M{P}}
\newcommand{\MQ}{\M{Q}}
\newcommand{\ML}{\M{L}}
\newcommand{\MW}{\M{W}}
\newcommand{\MCC}{\M{C}}
\newcommand{\MI}{\M{I}}
\newcommand{\MY}{\M{Y}}
 \newcommand{\bea}{ \left[ \begin{matrix} }
 \newcommand{\eea}{ \end{matrix} \right] }
\newcommand{\diag}{ {\tt diag}}
\newcommand{\sq}{ {\tt sq}}
\newcommand{\twist}{ {\tt twist}}
\newcommand{\struct}{ {\tt struct}_{\mc{E}}}
\newcommand{\structLa}{ {\tt struct}_{\mc{E}^{(1)}}}
\newcommand{\structLb}{ {\tt struct}_{\mc{E}^{(2)}}}
\newcommand{\structLc}{ {\tt struct}_{\mc{E}^{(3)}}}
\newcommand{\trace}{{\tt trace}}
\newcommand{\cTT}{ \mathcal{T}}
\newcommand{\cM}{ \mathcal{M}}
\newcommand{\mb}[1]{\mathbb{#1}}
\newcommand{\mc}[1]{\mathcal{#1}}
\newcommand{\bmat}[1]{\begin{bmatrix} #1\end{bmatrix}}
\newcommand{\arvind}[1]{\textcolor{blue}{Arvind: #1}}
\definecolor{blue}{rgb}{0,0,1}
\definecolor{red}{rgb}{1,0,0}
\definecolor{green}{rgb}{.5,.8,.5}
\newcommand\MEK[1]{\textcolor{red}{MEK: #1}}
\newcommand{\tensor}[1][]{\@tensor[#1]}
\def\@tensor[#1] (#2,#3) #4; {{ 

\pgfkeys{/tensor/.cd,#1}

\def\depthScale{0.5} 

\pgfmathsetmacro{\numSlicesMinusOne}{\nSlices-1}
\pgfmathsetmacro{\numSlicesPlusOne}{\nSlices+1}


\pgfmathsetmacro{\sliceLength}{\myScale*\dimOne}

\ifthenelse{\equal{\sliceType}{lateral}}
	{
	
	\pgfmathsetmacro{\sliceWidth}{\myScale*\sWidth*0.9*\dimTwo/\nSlices}
	\pgfmathsetmacro{\sliceGap}{\myScale*\dimTwo/(\nSlices-1) - \nSlices*\sliceWidth/(\nSlices-1)}
	\pgfmathsetmacro{\sliceDepth}{\myScale*\dimThree}
	
	} 
	{
	\ifthenelse{\equal{\sliceType}{frontal}}
		{
		
		\pgfmathsetmacro{\sliceDepth}{\myScale*\sWidth*0.9*\dimThree/\nSlices}
		\pgfmathsetmacro{\sliceGap}{\myScale*\dimThree/(\nSlices-1) - \nSlices*\sliceDepth/(\nSlices-1)}
		\pgfmathsetmacro{\sliceWidth}{\myScale*\dimTwo}
	
		}
		{
		\pgfmathsetmacro{\sliceWidth}{\myScale*\dimTwo}
		\pgfmathsetmacro{\sliceDepth}{\myScale*\dimThree}
		}

	}

\def\xFront{#2 + \xShift}	
\def\yFront{#3 + \yShift}
\def\xBack{#2 + \xShift + \depthScale*\sliceDepth + \xSpec*\sliceDepth}
\def\yBack{#3 + \yShift + \depthScale*\sliceDepth + \ySpec*\sliceDepth}



\def\aFront{(\xFront, \yFront)}
\def\bFront{(\xFront, \yFront + \sliceLength)}
\def\cFront{(\xFront + \sliceWidth, \yFront + \sliceLength)}
\def\dFront{(\xFront + \sliceWidth, \yFront)}

\def\aBack{(\xBack, \yBack)}
\def\bBack{(\xBack, \yBack + \sliceLength)}
\def\cBack{(\xBack + \sliceWidth, \yBack + \sliceLength)}
\def\dBack{(\xBack+ \sliceWidth, \yBack)}

\ifthenelse{\NOT\equal{\myFill}{nofill}}
	{
	\def\tempTensor{
		\fill[\myFill!25] \bFront -- \bBack -- \cBack -- \cFront -- cycle; 
		\fill[\myFill!75] \dFront -- \dBack -- \cBack -- \cFront -- cycle; 
		\fill[\myFill!50] \aFront rectangle \cFront;  
	
		\draw \aFront rectangle \cFront; 
		\draw \bFront -- \bBack; 
		\draw \cFront -- \cBack;
		\draw \dFront -- \dBack;
	
		\draw \bBack -- \cBack;
		\draw \cBack -- \dBack;
		}
	}
	{ 

	\def\tempTensor{
		\draw \aFront rectangle \cFront; 
		
		\ifthenelse{\NOT\equal{\myBack}{0}}
		{
			\draw[dashed] \bBack -- \aBack -- \dBack;
		}{}
		
		\draw \dBack -- \cBack -- \bBack;

		\ifthenelse{\NOT\equal{\myBack}{0}}
		{
			\draw[dashed] \aFront -- \aBack;
		}{}
		
		\draw \bFront -- \bBack;
		\draw \cFront -- \cBack;
		\draw \dFront -- \dBack;
		}
	}


\ifthenelse{\equal{\sliceType}{lateral}}
	{
	\foreach\sliceCount in {0,...,\numSlicesMinusOne}
		{	
		\begin{scope}[shift ={(\sliceCount*\sliceWidth + \sliceCount*\sliceGap, 0)}]
			\tempTensor;
		\end{scope}
		}
	
	}
	{
	
	\ifthenelse{\equal{\sliceType}{frontal}}
	{
	
	\pgfmathsetmacro{\xStep}{\sliceDepth/2 + \sliceGap/2 + \myScale*\dimThree*\xSpec/(\nSlices-(1-\sWidth))}
	\pgfmathsetmacro{\yStep}{\sliceDepth/2 + \sliceGap/2 +  \myScale*\dimThree*\ySpec/(\nSlices-(1-\sWidth))}
	
	\foreach\sliceCount in {-\numSlicesMinusOne,...,0}
		{	
		
		\begin{scope}[shift = {(-\sliceCount*\xStep, -\sliceCount*\yStep)}]
			\tempTensor;
		\end{scope}
	
		}
	
	}
	{
	\tempTensor;
	}
	
	}


\node at (#2 + \dimTwo/2, #3 + \dimOne/2) {#4};

}} 
\title{Structured Matrix Approximations via Tensor Decompositions
\thanks{Submitted to the editors DATE.
\funding{This material is partly based upon work supported by the National Science
Foundation under Grants {NSF DMS} 1821148, 1821149, and
Tufts T-Tripods
Institute NSF HDR grant CCF-1934553. 
This material is partly based upon work supported by the National Science Foundation under Grant No. DMS-1439786 and by the Simons Foundation Grant No. 50736 while the first author was in residence at the Institute for Computational and Experimental Research in Mathematics in Providence, RI, during the ``Model and dimension reduction in uncertain and dynamic systems'' program.}}}
\author{Misha E.\ Kilmer\thanks{Department of Mathematics, Tufts University, Medford, MA 02155 ({\tt misha.kilmer@tufts.edu}),}
 \and Arvind K.\ Saibaba\thanks{Department of Mathematics, NC State University, Raleigh, NC
 ({\tt asaibab@ncsu.edu}),} 
}
\begin{document}

\maketitle

\begin{abstract}
We provide a computational framework for approximating a class of structured matrices; here, the term structure is very general, and may refer to a regular sparsity pattern (e.g., block-banded), or be more highly structured (e.g., symmetric block Toeplitz). The goal is to uncover {\it additional latent structure} that will in turn lead to computationally efficient algorithms when the new structured matrix approximations are employed in the place of the original operator. Our approach has three steps: map the structured matrix to tensors, use tensor compression algorithms, and map the compressed tensors back to obtain two different matrix representations---sum of Kronecker products and block low-rank format. The use of tensor decompositions enables us to uncover latent structure in the problem and leads to compressed representations of the original matrix that can be used efficiently in applications.
The resulting matrix approximations  are memory efficient, easy to compute with, and preserve the error that is due to the tensor compression in the Frobenius norm.  Our framework is quite general.  We illustrate the ability of our method to uncover block-low-rank format on structured matrices from two applications: system identification, space-time covariance matrices.  In addition, we demonstrate that our approach can uncover sum of structured Kronecker products structure on several matrices from the SuiteSparse collection.  Finally, we show that our framework is broad enough to encompass and improve on other related results from the literature, as we illustrate with the approximation of a three-dimensional blurring operator. 

\end{abstract}

\begin{keywords}
structured matrices, tensor decompositions, Kronecker products, system identification, image deblurring, covariance matrices
\end{keywords}
\begin{AMS}

\end{AMS}


\section{Introduction}
Structured matrices are ubiquitous in many applications such as signal processing, image deblurring, finite difference discretizations of partial differential equations (PDEs), and control theory.  For example, modeling a system in which the operator is spatially invariant in both the vertical and horizontal directions, with zero boundary conditions (e.g., blur in an image) leads, upon discretization, to a matrix that is block Toeplitz with Toeplitz blocks (BTTB).  Discretization of PDE models often leads to matrices that are at least block banded and perhaps possess additional structure. 
Matrices with such structure are advantageous for iterative linear solvers such as Krylov subspace methods because the required matrix-vector products per iteration can usually be done efficiently by leveraging the structure. 

However, many applications demand a factorization of the operator that reveals information about the rank or will allow for a low-rank approximation.  For large scale problems, the classes of structured matrices for which such decompositions are easily and directly computable are limited -- e.g. matrices that are diagonalized or block-diagonalized by fast transform based methods (e.g., BCCB), or matrices generated by a single Kronecker product.  One of the purposes of this
paper, therefore, is to expand the classes of structured matrices for which low-rank factorizations can be obtained.  Specifically, we give a framework for computing block low- rank format which is applicable to a general class of structured matrices. 

There are many applications in which the operator is expressed naturally as a sum of Kronecker products \cite{van2000ubiquitous}. 
If the number of the terms in the sum is small, the Kronecker structure can be leveraged to compute matrix-vector products quickly, or to develop preconditioners or approximate truncated SVD filters (e.g. \cite{GarveyMengNagy2018}). Storage of the relevant components needed to perform products with the operator or its transpose can be highly efficient.  

Our goal in this paper therefore is 
to compute operator approximations that discover either/both the Kronecker sum structure and block low-rank structure while preserving other underlying matrix structure, such as (block) Toeplitz or Hankel, (block) bandedness, symmetry and positive (semi)-definiteness.  
We propose to arrive at these operator approximations in an innovative way in three steps:  first, we remove redundancies through a matrix-to-tensor mapping, with the order of the tensor as determined by properties of the matrix; second, we utilize tensor-decomposition methods at our disposal to compute a tensor approximation; finally, we obtain our matrix approximation by mapping the tensor approximation back to an operator by inverting our matrix-to-tensor mapping. 

\paragraph{Contributions}
We summarize the contents of the paper and highlight the major contributions. We develop a tensor-based framework for computing structure-preserving matrix approximations for use in applications from scientific computing. 
\begin{enumerate}
    \item We develop a mapping between structured block matrices to tensors. The tensors can then be compressed and stored in the appropriate tensor format. We then relate  the error in tensor representation to the matrix representation and show that these errors are the same in the Frobenius norm.
    \item We use the CP and Tucker formats for compressing the tensor, and we show how they can be used for two different efficient matrix representations---Kronecker product sums, and block low-rank formats. Our methods are structure-preserving in the sense that both of these efficient matrix representations retain important features from the original structured block matrices. In particular, we pay special attention to preserving positive definiteness and semidefiniteness.
    \item We demonstrate the computational benefits of our approach on two different application areas: eigensystem realization algorithm for system identification and space-time covariance matrices, which yield block Hankel and (symmetric) block Toeplitz matrices respectively. We also demonstrate our approach on several sparse block tridiagonal test matrices from the SuiteSparse matrix collection. 
    \item We show how to extend this approach that we developed for structured block matrices to multilevel structured block matrices. We show how to compute efficient Kronecker product sums after compressing the tensor using Tucker format. 
    \item We discuss how our tensor-based approaches are related to and can recover existing algorithms from the literature and how we might use our approach to improve on some of them. 
\end{enumerate}

\paragraph{Relation to previous literature} The present approach differs from the methods in the literature on direct operator approximation by use of Tensor Train tensor approximations and related tensor formats~\cite{olshevsky2006tensor,oseledets2011tensor,oseledets2010approximation,oseledets2012solution}, which are particularly effective methods in generating compact, approximate representations to high dimensional operators in order to solve linear systems.  Our methodology and goals are different. We also mention the work in ~\cite{batselier2018computing} that has similar goals to ours but is different both in the way the tensors are constructed and compressed. 
Furthermore, in our approach, several types of structured approximations are possible, including Kronecker-sum, and block structured low-rank approximation.   
The appeal of our approach is the general nature of the term ``structure'' as well as the fact that it is agnostic to which type of tensor approximation algorithm is employed.  Our approach is flexible in that any tensor approximation method can be utilized in the second step, and we can provably relate the quality of our matrix approximation. As we discuss in \Cref{sec:multilevel}, the framework we provide here either subsumes or allows improvement upon some other methods in the literature that aim to provide optimal/near optimal Kronecker product approximations to  structured matrices, such as those in \cite{KammNagy,nagy2006kronecker}.

\paragraph{Organization and Contents} The first part of this paper is devoted to a general formulation of structure and developing a matrix-to-tensor invertible mapping that can utilize this formulation.  While we describe this in detail for two level matrices having structure on the outermost layer, we show that our method is applicable for higher level structure as well.  Since the matrix-to-tensor mapping is invertible, if we are given an approximation to the tensor representation of our matrix, we can map this approximation back to a matrix approximation. We give a result that relates the quality of the tensor approximation to the quality of the matrix approximation.   
Next, we investigate the specific structure that the matrix approximation can inherit depending on the type of tensor approximation used.  For this, we focus on a CP formulation of our tensor approximation, and on a truncated-HOSVD based approximation.

 This paper is organized as follows.  \Cref{sec:background} describes the notation and terminology, including a brief description of the CP and Tucker decompositions.  In \Cref{sec:mattotens}, we describe the block matrix to tensor mapping, with examples for clarity.  We also give a theoretical result that relates the tensor approximation problem to the matrix approximation problem. \Cref{sec:kron} describes how we obtain Kronecker sum approximations to the operator from CP or Tucker approximations to the tensor.  The subject of \Cref{sec:blockstructured} is how to obtain matrix approximations that are block structured factorization from the CP or Tucker approximations to the original tensor.   To this end, we also discuss how to preserve symmetric positive definiteness.  An application in subspace system identification and in space-time covariance matrices is given in \Cref{sec:apps}. We also process several matrices from the SparseSuite in \Cref{sec:apps}.   Our framework is generalized to capture higher level structure in \Cref{sec:multilevel}.  As
an illustration of the utility of our approach, we highlight how our framework can be used to capture or improve on a few previous results in the literature.  Conclusions and future work are addressed in \Cref{sec:final}.

\section{Background and Notation}  \label{sec:background}
We briefly clarify the notation that we use in this paper and some background material on Kronecker products and tensors. In this brief review, we mostly focus on third order tensors; however, they can be readily extended to higher order tensors and we refer to~\cite{kolda2009tensor,hackbusch2012tensor} for the details.

\subsection{Notation and Terminology}
Throughout this paper, matrices are defined in bold uppercase, vectors are in bold lowercase, and third order tensors are in calligraphic boldface script. 

Let $\TA \in \mathbb{R}^{m \times p \times n}$. We denote the frontal slices as $\TA_{:,:,j}$, lateral slices as $\TA_{:,j,:}$, and horizontal slices as $\TA_{j,:,:}$. 
Following \cite{kilmer2013third}, we use invertible mappings between $m \times n$ matrices and $m \times 1 \times n$ tensors 
by twisting and squeezing\footnote{If $\TA$ is $m \times 1 \times n$, the {\sc Matlab} command $\mbox{\tt squeeze}(\TA)$ returns the $m \times n$ matrix.  Note, however, that if the first mode has dimension 1, the squeeze command in {\sc Matlab} behaves differently, depending on the size of the other modes.  For example, the \mbox{\tt squeeze} of a $1 \times 1 \times n$ tensor  produces an length-$n$ column vector, the \mbox{\tt squeeze} of a $1 \times m\times n$ produces an $m\times n$ matrix.}: i.e., $\MX \in \mathbb{C}^{m \times n}$ is related to ${\TX}_{:,j,:}$ via
\[  {\TX} = \mbox{\tt twist}(\MX)  \mbox{ and }  \MX = \sq( {\TX}_{:,j,:}). \]

Tensor unfoldings refer to unwrapping a tensor into a matrix.  For example, a third order tensor $\TA\in \mb{R}^{m \times p \times n}$ admits three unfoldings \cite{kolda2009tensor}:
\[ \MA_{(1)} = [ \TA_{:,:,1}, \TA_{:,:,2}, \cdots, \TA_{:,:,n}]  \in \mathbb{C}^{m \times n p}\]
\[ \MA_{(2)} = [ \TA_{:,:,1}^T,\TA_{:,:,2}^\top, \cdots, \TA_{:,:,n}^\top ]  \in \mathbb{C}^{p \times m n} \]
and
\begin{equation} \label{eq:mode3unfold} \MA_{(3)} = [ \mbox{sq}(\TA_{:,1,:})^\top,\mbox{sq}(\TA_{:,2,:})^\top,\ldots,\mbox{sq}(\TA_{:,p,:})^\top] \in \mathbb{C}^{n \times mp }.\end{equation}
The unfoldings can be used to define mode-wise products between matrices and tensors.   For example, let 
\[ \TA \times_i \MU  \equiv  \MU \MA_{(i)}, \qquad i=1,2,3,\]
where it is clear that the column dimension of $\MU$ must match the row dimension of $\MA_{(i)}$ and the resulting tensor must have $i$th mode of dimension $r$, where $r$ is the number of rows in $\MU$. 

If $\V{a} \in \mb{R}^m, \V{b} \in \mb{R}^\ell, \V{c} \in \mb{R}^{n}$ , then 
\[ \TA := \V{a} \circ \V{b} \circ \V{c}  \in \mb{R}^{m \times \ell \times n}\]
is a third order, rank-1 tensor, with $(i,j,k)$ entry of $\TA$ given by $\V{a}_i \V{b}_j \V{c}_k$.

For convenience, we may also use the {\tt vec} and {\tt reshape} commands to define mappings between matrices and vectors by column unwrapping and reshaping:
\[ {\V{a}}  = \mbox{\tt vec}(\MA) \in \mathbb{C}^{mn}  \leftrightarrow \MA = \mbox{\tt reshape}({\V{a}},[m,n]) .\]

\subsection{Kronecker products}

As we will see, the recovered matrix approximations all involve one or more Kronecker products.  Let $\MA \in \mb{R}^{m \times p}$ and $\MB \in \mb{R}^{n \times \ell}$. Then their Kronecker product $\MB \otimes \MA\in \mb{R}^{(mn)\times (p \ell)}$ is the block matrix 
\[ \MB \otimes \MA = \begin{bmatrix} b_{11} \MA & b_{12} \MA & \cdots & b_{1p} \MA \\
                                                       b_{21} \MA & b_{22} \MA & \ddots & \vdots \\
                                                         \vdots & \cdots  & \cdots & \vdots \\
                                                       b_{m1} \MA & \cdots & \cdots & b_{mn} \MA  \end{bmatrix}. \]

Kronecker products satisfy some important properties \cite{van2000ubiquitous} which can be used to great computational advantage in applications.  We review a few of these properties: 
 \begin{enumerate}
   \item $\mbox{\tt vec}(\MA \MX \MB^\top) \equiv (\MB \otimes \MA) \mbox{\tt vec}(\MX)$, where the $\mbox{\tt vec}(\cdot)$ is a vectorization operator converts a matrix to a column vector by stacking its columns. 
   \item For $\MA, \MB, \M{C},\MD$ of appropriate dimension $(\MA \otimes \MB)(\M{C} \otimes \MD) = (\MA \M{C}) \otimes (\MB \MD)$. 
   \item Kronecker products of orthogonal matrices are orthogonal.  
   \item If $\MA$ is a structured matrix (e.g., diagonal, banded, upper triangular, Toeplitz, etc), then $\MA \otimes \MB$ will inherit that structure on the block level.   
   \item The SVD of $\MA \otimes \MB$, is obtained from the SVDs $\MA = \MW \MS \MQ^\top$ and $\MB = \MU \MD \MV^\top$ since $(\MU \MD \MV^\top) \otimes (\MW \MS \MQ^\top) = (\MU \otimes \MW) (\MD \otimes \MS) (\MV \otimes \MW)^\top$.  Note
that though the singular values of $\MA \otimes \MB$ are contained in the diagonals of the diagonal matrix $\MD \otimes \MS$, they are likely not
in order, so the ordered SVD would require an additional permutation. 
\item As a consequence of the last point, $\|\MA \otimes \MB \|_F = \|\MA\|_F\|\MB\|_F$.
  \end{enumerate}

\subsection{CP Decomposition}\label{ssec:cp}

Suppose the tensor $\TA \in \mathbb{R}^{m \times p \times n}$ can be expressed
\[ \TA = \sum_{i=1}^r \MX_{:,i} \circ \M{Y}_{:,i} \circ \M{Z}_{:,i}, \]
where $\MX \in \mathbb{R}^{m \times r}$, $\M{Y} \in \mathbb{R}^{p \times r}$, $\M{Z} \in \mathbb{R}^{n \times r}$ are called the {\it factor matrices}.   The representation is also expressed using the so-called Kruskal notation in terms of the factor matrices $ \llbracket \MX,\MY, \M{Z} \rrbracket $.  
Such a decomposition is called the CP\footnote{Also goes by CANDECOMP/PARAFAC, Canonical Factors, etc.} decomposition of a tensor.   If $r$ is minimal, then $r$ is called the rank of the tensor.   
Unfortunately, determining the rank of a tensor is in general an NP-hard problem.  Moreover, the factor matrices do not need to have independent, let alone orthogonal, columns.  For information about properties of the CP decomposition, see \cite{kolda2009tensor}.

Independent of whether or not $r$ is minimal, if there exists a triple of factor matrices such that $\TA = \llbracket \MX, \MY, \M{Z} \rrbracket$, then we can use the factor matrices to deduce relationships among the various slices of the tensor.  For example, the {$k$th lateral slice} has entries corresponding to the triple matrix product
\begin{equation} \label{eq:cplat} \sq (\TA_{:,k,:}) = \MX \mbox{diag}(\M{Y}_{k,:}) \M{Z}^\top , \qquad k=1,\dots,p, \end{equation}
with similar formulations for the other slices. 
Thus, a CP decomposition of a third order tensor reveals a `joint diagonalization' of the matrices that comprise the slices of 
the tensor, although 
the matrices in the diagonalization {need not be square nor invertible}. A popular way of computing the CP decomposition is the alternating least squares approach; see e.g.,~\cite[Section 3.4]{kolda2009tensor}.

\subsection{Tucker Decomposition}\label{ssec:hosvd}
The Tucker-3 factorization \cite{tucker1966some} of a third-order tensor $\TA \in \mb{R}^{m\times p \times n}$ is 
\[  \TA = \T{G} \times_1 \MU \times_2 \MV \times_3 \MW = \sum_{i=1}^{R_1} \sum_{j=1}^{R_2} \sum_{k=1}^{R_3} 
\T{G}_{i,j,k} ( \MU_{:,i} \circ \MV_{:,j} \circ \MW_{:,k}) , \]
where $\T{G} \in \mb{R}^{R_1 \times R_2 \times R_3}$ is the core tensor, $\MU\in \mb{R}^{m \times R_1}$, $\MV\in \mb{R}^{p \times R_2}$, $\MW\in \mb{R}^{n \times R_3}$.  The matrices $\MU, \MV, \MW$ are also called factor matrices. We say that the tensor is rank$-(R_1,R_2,R_3)$, if $R_j = \text{rank}(\MA_{(j)})$ for $j=1,\dots,3$.   The core is not typically `diagonal' and the elements need not be non-negative.  If the core is diagonal, and $R_1 = R_2 = R_3$ then this decomposition reduces to a CP decomposition.

For later use, we highlight one important feature that relates the mode-3 unfolding of $\TA$ with the mode-3 unfolding of the core $\TC$ and the factor matrices~\cite{kolda2009tensor}:
\begin{equation} \label{eq:mode3flatkron}  
      \TA_{(3)} = \MW \T{G}_{(3)} (\MV^\top \otimes \MU^\top)  .
\end{equation}
Combining~\eqref{eq:mode3flatkron} with~\eqref{eq:mode3unfold}, we observe that \textbf{$k$th lateral slice} of $\TA$
is given by 
\begin{equation} \label{eq:klatslice}  
   \sq(\TA_{:,k,:}) = \MU \left(\sum_{j=1}^{r_2} v_{k,j} \sq(\T{G}_{:,j,:})\right) \MW^\top \qquad k=1,\dots,p. 
  \end{equation}

Of particular interest is the Higher Order SVD (HOSVD)~\cite{de2000multilinear} for obtaining a compressed representation in the Tucker format. Suppose we want to compress $\TA \in \mb{R}^{m\times p \times n}$ and the target rank is $(r_1,r_2,r_3)$.  The HOSVD first proceeds by computing  the SVDs of the 3 unfoldings and letting $\MU, \MV, \MW$ correspond to the top $r_1,r_2,r_3$ left singular vectors of each mode unfolding. Next, the core tensor is obtained as $\T{G} = \TA \times_1 \MU^\top \times_2 \MV^\top \times_3 \MW^\top$, so that we have the approximation
\[ \TA \approx \T{G} \times_1 \MU \times_2 \MV \times_3 \MW.\]
For short, we write $\TA \approx [\TG; \M{U}, \M{V}, \M{W}]$. The accuracy of the HOSVD can be established by the results in~\cite[Thoerem 10.3]{hackbusch2012tensor}. Besides the HOSVD, there are other efficient algorithms for approximating a tensor in the Tucker format. For some recent work on randomized algorithms in the Tucker format, see~\cite{minster2020efficient,sun2020low}.


\section{Block Matrix to Tensor Mapping}\label{sec:mattotens} 

In this section, we consider a block matrix $\M{A} \in \mb{R}^{(\ell m) \times (q n)}$ with $\ell \times q$ blocks of size $m\times n$ each. The basic idea is to first represent the matrix $\M{A}$ using a sum of Kronecker products.  Let $\M{A}_k \in \mb{R}^{m \times n}$ be a {\it distinct} subblock of a matrix which repeats $\eta_k > 0$ times for $k=1,\dots,p$. Here $1 \leq p \leq \ell q$ is the number of distinct subblocks. Define the tuple of matrices\footnote{This tuple is unique up to permutation.} 
\begin{equation*}
   \mc{A} =  \> (\M{A}_1,\dots,\M{A}_p ) \qquad 
        \mc{E} =  \> (\M{E}_1,\dots,\M{E}_p ),
 \end{equation*}
where $\M{E}_k \in \mb{R}^{\ell\times q}$ with entries 
\[ [\M{E}_k]_{ij} = \left\{ \begin{array}{ll} \frac{1}{\sqrt{\eta_k}} & \text{if } \M{A}_k \text{ occurs in } (i,j)^\text{th} \text{ block of }\M{A}   \\ 0 & \text{otherwise}. \end{array} \right.\]
 The matrices $\M{E}_k$ both account for the position and repetition of the blocks $\M{A}_k$ and satisfy \begin{equation} \label{eq:fnormw} \|\M{E}_k\|_F = 1, \qquad k=1,\dots,p. \end{equation} 
 To express the original matrix $\M{A}$ as the sum of Kronecker products, we define
\begin{equation}
    \label{eqn:kronsum}
    \struct (\mc{A}) = \sum_{k=1}^p \M{E}_k \otimes  \left( \sqrt{\eta_k} \,\M{A}_k \right).
\end{equation}
Then it is verified that $\struct(\mc{A}) = \M{A}$. 

\begin{remark}
Consider the trace inner product $\langle \M{C},\M{D} \rangle = \trace(\M{C}^\top\M{D})$ for matrices $\M{C},\M{D}$ of the same size.  If we let $\M{C} = \M{E}_j \otimes \M{K} $ and $\M{D} = \M{E}_k \otimes   \M{L}$ for $\M{K},\M{L} \in \mb{R}^{m\times n}$ and $j\neq k$, then $\M{C}$ and $\M{D}$ are orthogonal with respect to this inner product, since
 \[ \langle \M{C},\M{D} \rangle =\trace(\M{E}_j^\top\M{E}_k) \trace(\M{K}^\top\M{L}) = 0.  \]
This means that in~\eqref{eqn:kronsum} we have decomposed $\MA$ into a sum of orthogonal matrices with respect to the trace inner product.
\end{remark}

We illustrate this data structure to represent through several commonly occurring examples. For simplicity, we take the number of blocks $\ell = q$, but the general case can be handled easily.
\begin{example}[Block-diagonal matrix] We consider a block-diagonal matrix of the form
    \[ \M{A} = \bea \M{A}_1 \\ & \M{A}_2 \\ & &  \ddots \\ & & & \M{A}_p \eea \in \mb{R}^{m\ell \times n\ell}. \]
    In this case we have $p = \ell = q$, and
    \[ \mc{A} = ( \M{A}_1,\dots,\M{A}_p )  \qquad \mc{E} = (\V{e}_1\V{e}_1^\top,\dots,\V{e}_p\V{e}_p^\top),\]
    where $\V{e}_j$ is the $j-$th column of a $p\times p$ identity matrix.
    
    \end{example}
  \begin{example}[Block-banded matrix] If the matrix is block tridiagonal with $\ell \times \ell $ blocks, then $p = 3\ell -2$ (block-non-symmetric) and $p = 2\ell -1$ (block-symmetric). More generally, if the matrix is block-banded and we denote by $b$ the block-semibandwidth, then $p = m(2b+1)-b(b+1)$ if the matrix is block-non-symmetric, and $p=m(b+1)-b(b+1)/2$ if block-symmetric. For example, consider a block-tridiagonal matrix of the form 
 \[ \M{A} = \bea \MA_1 & \MA_{2} & 0  & 0 \\ \MA_2 & \MA_3 & \MA_{4} & 0 \\ 0 & \MA_4 & \MA_5 & \MA_6 \\ 
                      0 & 0 & \MA_6 & \MA_{7} \eea \in \mb{R}^{4m\times 4n}.\]
    In this case we have $p= 7$, 
    \[ \mc{A} = ( \M{A}_1,\dots,\M{A}_{7} )  \qquad \mc{E} = ( \V{e}_1\V{e}_1^\top, \V{e}_2\V{e}_1^\top, \dots, \V{e}_3\V{e}_4^\top, \V{e}_4\V{e}_4^\top),\]
    where $\V{e}_j$ is the $j-$th column of a $4\times 4$ identity matrix.  
    \end{example}
    
\begin{example}[Block-Toeplitz]\label{ex:btoep} Consider the block-Toeplitz matrix of the form 

    \[\M{A} = \bea \MA_1 & \MA_{\ell+1} & \dots & \MA_{2\ell-1} \\ \MA_2 & \MA_1 & \dots & \MA_{2\ell-2} \\ \vdots & \ddots & \ddots & \vdots \\ \MA_\ell & \dots & \MA_2 & \MA_1 \eea \in \mb{R}^{\ell m\times \ell n}.\]
    
    Define the shift matrices  
    \[ \M{E}_d = \bea 0 & \\ 1 &  0\\  & \ddots & \ddots\\  & & 1 &0  \eea \in \mb{R}^{\ell\times \ell} \] 
    and $\M{E}_u = \M{E}_d^\top$. Then 
    \[  \mc{A} = ( \M{A}_1,\dots,\M{A}_{2\ell -1} ) \] 
    and 
    \[ \mc{E} = \left(  \frac{1}{\sqrt{\ell}}\M{E}_d^0, \frac{1}{\sqrt{\ell-1}}\M{E}_d, \dots,  \M{E}_d^{\ell-1},  \frac{1}{\sqrt{\ell-1}} \M{E}_u, \frac{1}{\sqrt{\ell-2}}\M{E}_u, \dots,\M{E}_u^{\ell-1} \right).\] 
    If the matrix is block-symmetric then additional savings are possible since $p = \ell$. Similarly, if the matrix is additionally block banded, then further savings are possible. A similar approach is possible if the matrix is block-Hankel instead of block-Toeplitz.
\end{example}

 In each case above, the reader is invited to verify that~\eqref{eqn:kronsum} holds. This representation is unique up to reordering of the terms in the summation.

Usually, knowledge of the problem structure can be used to find this representation. If no knowledge is available, and a purely algebraic technique is desired, this representation can be determined by a greedy search. Assuming that it takes $\mc{O}(1) $ operations to verify if the blocks are equal, this representation can be computed in $\mc{O}((\ell q)^2)$ operations (worst case complexity), but the complexity is much lower if there is latent structure (e.g., block Toeplitz, block-diagonal, etc) to be discovered. Furthermore, if the block sizes are large it may be advantageous to store the matrices in $\mc{E}$ in compressed sparse column representation.

With this matrix $\M{A} \in \mb{R}^{(\ell m) \times (q n)} $, and associated data structures $\mc{A}$ and $\mc{E}$, we associate a third-order tensor mapping  $\cTT_{\mc{E}} : \mb{R}^{\ell m \times qn} \rightarrow  \mb{R}^{m\times p \times n} $ defined in terms of its lateral slices as 
$$\cTT_{\mc{E}}[\M{A}]_{:,k,:} = \sqrt{\eta_k} \,\mathsf{twist}(\M{A}_k)  \qquad k = 1,\dots p.$$ 
The tensor-to-matrix mapping $\mc{M}_{\mc{E}}:\mb{R}^{m\times p\times n} \rightarrow \mb{R}^{\ell m\times qn}$ is defined as 
\[ \mc{M}_{\mc{E}}[\TX] := \struct\left(\mathsf{sq}(\TX_{:,k,:}) \right)_{k=1}^p = \sum_{k=1}^p\M{E}_k \otimes \mathsf{sq}(\TX_{:,k,:}).\]
It is easy to verify that $\mc{M}_\mc{E}[\cTT_\mc{E}[\M{A}]] = \M{A}$ since
\[ \mc{M}_\mc{E}[\cTT_\mc{E}[\M{A}]] = \sum_{k=1}^p\M{E}_k \otimes \sq(\sqrt{\eta_k}\twist(\M{A}_k)) = \sum_{k=1}^p\M{E}_k \otimes \left(\sqrt{\eta_k} \, \M{A}_k \right) = \M{A}. \]

\subsection{Strategy}
Given $\MA$ and the new mapping scheme, we are now ready to present our approach to generating our matrix approximations.  The steps are outlined in
\Cref{alg:strategy}.  It is important to note that
this approach is applicable for any choice of tensor decomposition and corresponding tensor approximation
strategy.  However, the practical utility is tied to the structure obtained after applying the tensor-to-matrix mapping in the last step.  Thus, in \Cref{sec:kron}, we consider some specific tensor approximation strategies and delve into the resulting matrix structure.  
\begin{algorithm}[ht]  \caption{\label{alg:strategy} Tensor-based Matrix Approximation} 
\begin{algorithmic}[1]
\STATE Determine $\mc{E}, \mc{A}$ from $\MA$. \\
\STATE Define $\TX:= \cTT_{\mc{E}}[\MA]$. \\
\STATE Compute a tensor approximation to $\TX$, $\TTT := \widehat{\cTT}_\mc{E}[\MA]$. \\
\STATE Define the matrix approximation $\widehat{\MA}:=\mc{M}_\mc{E}[{\TTT}]=\mc{M}_\mc{E}[\widehat{\cTT}_\mc{E}[\MA]]$. 
\end{algorithmic}
\end{algorithm}

For our approach to be successful, we need to know how to control the error in the matrix approximation.  Fortunately, we can directly connect the error in the tensor approximation with the error in the matrix approximation, as we now show.  
This result is general in that does not depend on the specific tensor structure that is used, or the algorithm used for low-tensor rank approximation. 
\begin{lemma}\label{lem:tenapprox} Let $\M{A} \in \mb{R}^{(\ell m) \times (q n)} $ and let $\cTT_{\mc{E}}[\cdot]$ and $\mc{M}_{\mc{E}}[\cdot]$ be the tensor-to-matrix and matrix-to-tensor mappings respectively. Let $\widehat{\cTT}_\mc{E}[\M{A}] \approx {\cTT}_\mc{E}[\M{A}]$ be a tensor approximation computed using any appropriate method. Then the error in the matrix approximation satisfies
\[\|\M{A} -\mc{M}_\mc{E}[\widehat{\cTT}_\mc{E}[\M{A}]] \|_F  =  \|\cTT_{\mc{E}}[\M{A}] - \widehat{\cTT}_{\mc{E}}[\M{A}]\|_F.\]
\end{lemma}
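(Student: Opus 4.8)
The plan is to exploit the linearity of the tensor-to-matrix map $\mc{M}_\mc{E}$ together with the inversion identity $\mc{M}_\mc{E}[\cTT_\mc{E}[\M{A}]] = \M{A}$ already established above, and then to evaluate the resulting Frobenius norm slice-by-slice using the orthogonality recorded in the Remark. First I would note that $\mc{M}_\mc{E}$ acts linearly on its tensor argument: since $\sq(\cdot)$ merely reshapes a lateral slice and each map $\M{E}_k \otimes (\cdot)$ is linear, the definition $\mc{M}_\mc{E}[\TX] = \sum_{k=1}^p \M{E}_k \otimes \sq(\TX_{:,k,:})$ is linear in $\TX$. Writing the tensor error as $\TZ := \cTT_\mc{E}[\M{A}] - \widehat{\cTT}_\mc{E}[\M{A}]$, linearity and the inversion identity give
\[ \M{A} - \mc{M}_\mc{E}[\widehat{\cTT}_\mc{E}[\M{A}]] = \mc{M}_\mc{E}[\cTT_\mc{E}[\M{A}]] - \mc{M}_\mc{E}[\widehat{\cTT}_\mc{E}[\M{A}]] = \mc{M}_\mc{E}[\TZ] = \sum_{k=1}^p \M{E}_k \otimes \sq(\TZ_{:,k,:}). \]
This reduces the claim to showing that the Frobenius norm of this sum of Kronecker products equals $\|\TZ\|_F$.

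Next I would square the norm and expand the sum. By the Remark, the terms $\M{E}_j \otimes \M{K}$ and $\M{E}_k \otimes \M{L}$ are orthogonal in the trace inner product whenever $j \neq k$, because $\trace(\M{E}_j^\top \M{E}_k) = 0$ (the distinct blocks occupy disjoint positions, so the $\M{E}_k$ have disjoint supports). Hence all cross terms vanish and
\[ \left\| \sum_{k=1}^p \M{E}_k \otimes \sq(\TZ_{:,k,:}) \right\|_F^2 = \sum_{k=1}^p \left\| \M{E}_k \otimes \sq(\TZ_{:,k,:}) \right\|_F^2. \]
Applying the Kronecker Frobenius identity $\|\M{A} \otimes \M{B}\|_F = \|\M{A}\|_F \|\M{B}\|_F$ from the listed Kronecker-product properties, together with the normalization $\|\M{E}_k\|_F = 1$ from~\eqref{eq:fnormw}, each summand collapses to $\|\sq(\TZ_{:,k,:})\|_F^2$, leaving $\sum_{k=1}^p \|\sq(\TZ_{:,k,:})\|_F^2$.

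Finally I would identify this sum with the tensor Frobenius norm. Since $\sq(\cdot)$ only rearranges the entries of the lateral slice $\TZ_{:,k,:}$ into a matrix without altering them, $\|\sq(\TZ_{:,k,:})\|_F = \|\TZ_{:,k,:}\|_F$, and summing the squared slice norms over $k=1,\dots,p$ recovers $\|\TZ\|_F^2 = \|\cTT_\mc{E}[\M{A}] - \widehat{\cTT}_\mc{E}[\M{A}]\|_F^2$; taking square roots gives the stated equality. The computation is essentially routine, and the single point that genuinely carries the argument — the only place where the construction of $\mc{E}$ enters — is the orthonormality of the family $\{\M{E}_k\}$, namely the trace-orthogonality from the Remark combined with the normalization in~\eqref{eq:fnormw}. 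Without this the Kronecker terms would fail to decouple and the matrix and tensor errors could differ, so I would take care to invoke both facts explicitly rather than treat them as incidental.
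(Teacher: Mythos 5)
Your proof is correct and follows essentially the same route as the paper's: the paper likewise combines the two Kronecker sums, pulls the sum outside the squared Frobenius norm (justified, as you do via the Remark, by the disjoint supports of the $\M{E}_k$), and concludes with $\|\M{E}_k\otimes \M{B}\|_F = \|\M{E}_k\|_F\|\M{B}\|_F$ together with the normalization $\|\M{E}_k\|_F=1$. Your write-up merely makes explicit two steps the paper leaves implicit---the linearity of $\mc{M}_\mc{E}$ and the vanishing of the cross terms---which is a faithful elaboration rather than a different argument.
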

\begin{proof}
For simplicity, write $\TX = \cTT_{\mc{E}}[\M{A}]$ and ${\TTT} = \widehat{\cTT}_{\mc{E}}[\M{A}]$. By the assumptions made earlier, there are $p$ distinct, and non-overlapping blocks and each block is repeated $\eta_k$ times.  Combining this insight with the properties of Kronecker products and Frobenius norms:
\[ \begin{aligned}
\|\M{A} - \mc{M}_\mc{E}[{\TT}]\|_F^2 = & \> \left\|\sum_{k = 1}^p\M{E}_k \otimes \sq(\TX_{:k:}) - \sum_{k = 1}^p\M{E}_k  \otimes \sq({\TT}_{:k:})\right\|_F^2\\
= & \> \sum_{k = 1}^p \| \M{E}_k\otimes (\sq(\TX_{:k:}) - \sq({\TT}_{:k:})) \|_F^2 \\
=&  \> \sum_{k = 1}^p \|\M{E}_k \|_F^2\| \TX_{:k:} - {\TT}_{:k:} \|_F^2.
\end{aligned} \]
Taking square roots, we have the desired result.
\end{proof}

\begin{remark} In the worst case, if there no block-structure, we have $p =\ell q$. In this case, we can write 
\[ \M{A} = \sum_{j=1}^\ell \sum_{k = 1}^q \M{E}_{jk}\otimes \M{A}_{jk}, \]
where $\M{E}_{jk} = \V{e}_j\V{e}_k^\top \in \mb{R}^{\ell \times q}$, and $\M{A}_{jk} \in \mb{R}^{m\times n}$ is the $(j,k)$th block matrix of $\M{A}$. In this case, it may be more appropriate to define the tensor mapping $\cTT_{\mc{E}}[\M{A}] \in \mb{R}^{m\times \ell \times q \times n}$, with lateral slices 
\[ \cTT_{\mc{E}}[\M{A}]_{:,j,k,:} = \twist(\M{A}_{jk}) .\]
The important point is that it may be more appropriate to represent it as a 4th order tensor, rather than a 3rd order tensor. We will not consider this case here and leave it for future work. 
\end{remark}
 
\begin{remark} The result in~\cref{lem:tenapprox} is independent of the tensor approximation method that is used.  In the next sections, we describe the specific structure that can be obtained when using CP or truncated-HOSVD expansions, but it is important to note that other options are possible, and we do not cover them all here. \end{remark}

\begin{table} \centering
\caption{Summary of the notation used to map matrices to tensors.}
\label{tab:summary}
    \begin{tabular}{c|c|c}
    Symbol & Dimensions & Description \\ \hline
   
    $\T{X} = \mc{T}_{\mc{E}}[\M{A}]$& $m \times p \times n$  & Matrix mapped to tensor\\
     $\M{A} ={\mc{M}}_{\mc{E}}[\T{X}] $ & $(\ell m)\times (qn)$ & Tensor mapped to matrix  \\
    $\TT = \widehat{\mc{T}}_{\mc{E}}[\M{A}]$& $m \times p \times n$  & Tensor approximation \\
     $\widehat{\M{A}} = {\mc{M}}_{\mc{E}}[\TT]$& $(\ell m)\times (qn)$  & Matrix approximation
    \end{tabular}
\end{table}



\section{Kronecker sum approximations using tensor decompositions}\label{sec:kron}
In the notation of the previous section, suppose we have a tensor ${\cTT}_\mc{E}[\M{A}]$ that is the result of mapping a structured matrix $\M{A}$.   We decompose (approximately) the tensor to obtain $\widehat{\cTT}_\mc{E}[\M{A}]$, then we use the inverse mapping to get our matrix approximation. For simplicity, we write $\TX:= \cTT_{\mc{E}}[\MA]$ and $\TTT := \widehat{\cTT}_\mc{E}[\MA]$.  We will show that when the tensor approximations are available in either a CP or tr-HOSVD format, we obtain matrix approximations that involve a level of Kronecker structure.  Indeed, using the right lens, we can express our matrix approximation as a sum of (structured) Kronecker products of matrices.   

\begin{figure}[!ht]\centering
\begin{tikzpicture}[scale=0.3]
 \tikzmath{\h = 3; \w = 1;}
\tikzmath{\l = 0; \t = 3*\h; }
\foreach \x in {0,\h,2*\h}{
\filldraw[fill=blue, draw=black] (\l+ \x, \t - \x -\h ) rectangle ++(\h,\h);
}
\foreach \x in {\h,2*\h}{
\filldraw[fill=blue!50, draw=black] (\l+ \x, \t - \x) rectangle ++(\h,\h);
\filldraw[fill=blue!50, draw=black] (\l+ \x-\h , \t - \x -\h) rectangle ++(\h,\h);
}

\filldraw[fill=blue!20, draw=black] (\l+2*\h, \t -1*\h) rectangle ++(\h,\h);
\filldraw[fill=blue!20, draw=black] (\l, \t -3*\h) rectangle ++(\h,\h);

 \tikzmath{\l = \l +  5*\h; \t = 2*\h;}
\draw (\l -1.5*\h, 1.5*\h) node[fill=none,scale=2]   {$\approx$};  
\draw (\l -\w, 1.5*\h) node[fill=none,scale=2]   {$\sum_j$};  
\foreach \x in {1,2,3}{
\filldraw[fill=blue, draw=black] (\l+ \x, \t - \x) rectangle ++(\w,\w);
}

\foreach \x in {1,2}{
\filldraw[fill=blue!50, draw=black] (\l+ \x + \w, \t - \x) rectangle ++(\w,\w);
\filldraw[fill=blue!50, draw=black] (\l+ \x , \t - \x -1*\w) rectangle ++(\w,\w);
}

\filldraw[fill=blue!20, draw=black] (\l+3*\w, \t -1*\w) rectangle ++(\w,\w);
\filldraw[fill=blue!20, draw=black] (\l+1*\w, \t -3*\w) rectangle ++(\w,\w);

\draw (\l + 2.5*\w,\t -4*\w) node[fill=none,scale=1.5] {$\mathbf{C}_j$};

\tikzmath{\l = \l +  7*\w; \t = 2*\h;}
\draw (\l-0.3*\w,\t-\h-0.3*\w) -- (\l-0.5*\w,\t- \h  -.3*\w) --  (\l-0.5*\w,\t+0.3*\w) -- (\l-0.3*\w,\t+0.3*\w);
\draw (\l -2*\w, 1.5*\h) node[fill=none,scale=1.5]   {$\otimes$};  
\filldraw[fill=gray!40, draw=black] (\l , \t -\h ) rectangle ++(\w,\h);
\filldraw[fill=gray!40, draw=black] (\l + 1.5*\w, \t -\w ) rectangle ++(\w,\w);
\filldraw[fill=gray!40, draw=black] (\l + 3*\w  , \t -\w ) rectangle ++(\h,\w);

\draw (\l + 6.3*\w,\t-\h-0.3*\w) -- (\l+6.5*\w,\t- \h  -.3*\w) --  (\l+6.5*\w,\t+0.3*\w) -- (\l+6.3*\w,\t+0.3*\w);
\draw (\l + 2*\w,\t - 4*\w) node[fill=none,scale=1.5] {$\mathbf{D}_j$};

\end{tikzpicture}

\caption{A visualization of the Kronecker sum approximation of the structured matrix.}
\label{fig:kroneckersum}
\end{figure}
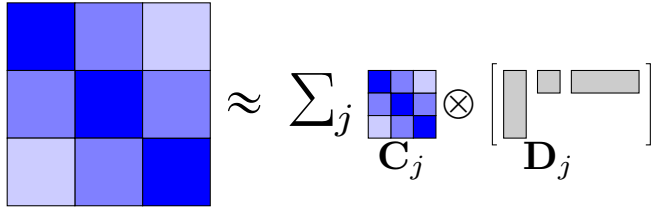
\subsection{Matrix Structure using CP}

Suppose we are given the approximation to $\TX\in\mb{R}^{m \times p \times n}$  in the Kruskal form for a third-order tensor $\TTT  = \llbracket \M{X}, \M{Y}, \M{Z} \rrbracket$.  We observe the following for the corresponding tensor-to-matrix mapping: 
\begin{theorem} \label{th:tildeA} Let $\MA \in \mb{R}^{(\ell m)\times (qn)}$ and let $\T{X} := \cTT_{\mc{E}}[\MA]$ be the mapped tensor. Suppose we approximate $\T{X}$ using $ {\TTT} = \llbracket \M{X}, \M{Y}, \M{Z} \rrbracket $, where $\M{X} \in \mb{R}^{m\times r}$, $\M{Y} \in \mb{R}^{p\times r}$,
$\M{Z} \in \mb{R}^{n\times r}$. Then 
\begin{equation} \label{eq:mystruct}  
 \cM_{\mc{E}}[ {\TTT} ] = ( \MI_\ell \otimes \M{X}) \struct\left( \diag(\MY_{k,:}) \right)_{k=1}^p  (\MI_q \otimes \M{Z}^\top) .\end{equation}
The term in the middle of the factorization is block $\ell \times q$ with $r \times r$ diagonal blocks.
\end{theorem}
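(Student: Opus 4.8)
The plan is to verify \eqref{eq:mystruct} by writing both sides as explicit sums of Kronecker products indexed by the $p$ distinct blocks and matching them term by term. First I would expand the left-hand side directly from the definition of the tensor-to-matrix map, namely $\cM_{\mc{E}}[\TTT] = \sum_{k=1}^p \M{E}_k \otimes \sq(\TTT_{:,k,:})$. The only structural input needed about the CP approximation is the lateral-slice identity \eqref{eq:cplat}, which gives $\sq(\TTT_{:,k,:}) = \MX \diag(\MY_{k,:}) \MZ^\top$ for each $k$. Substituting this yields $\cM_{\mc{E}}[\TTT] = \sum_{k=1}^p \M{E}_k \otimes \left( \MX \diag(\MY_{k,:}) \MZ^\top \right)$, which I take as the target expression to reach from the right-hand side.

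Next I would expand the right-hand side. Interpreting $\struct\left( \diag(\MY_{k,:}) \right)_{k=1}^p = \sum_{k=1}^p \M{E}_k \otimes \diag(\MY_{k,:})$ and using bilinearity of the Kronecker product together with linearity of matrix multiplication, the product $(\MI_\ell \otimes \MX)\,\struct\left( \diag(\MY_{k,:}) \right)_{k=1}^p\,(\MI_q \otimes \MZ^\top)$ becomes a sum over $k$ of the terms $(\MI_\ell \otimes \MX)(\M{E}_k \otimes \diag(\MY_{k,:}))(\MI_q \otimes \MZ^\top)$. The crucial step is the Kronecker mixed-product property (property~2 in the list above), applied twice: it collapses each such term to $(\MI_\ell \M{E}_k \MI_q) \otimes \left( \MX \diag(\MY_{k,:}) \MZ^\top \right) = \M{E}_k \otimes \left( \MX \diag(\MY_{k,:}) \MZ^\top \right)$, since $\MI_\ell \M{E}_k \MI_q = \M{E}_k$. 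Summing over $k$ reproduces the target expression exactly, establishing \eqref{eq:mystruct}.

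For the closing sentence, I would verify the block structure of the middle factor $\sum_{k=1}^p \M{E}_k \otimes \diag(\MY_{k,:})$. Since each $\M{E}_k \in \mb{R}^{\ell \times q}$, this matrix is naturally block $\ell \times q$ with blocks of size $r \times r$, and its $(i,j)$ block equals $\sum_{k=1}^p [\M{E}_k]_{ij} \diag(\MY_{k,:}) = \diag\!\left( \sum_{k=1}^p [\M{E}_k]_{ij} \MY_{k,:} \right)$, a sum of diagonal matrices and hence diagonal. This is exactly the inherited-structure property (property~4) applied blockwise.

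I do not anticipate a genuine obstacle: the statement is essentially an identity-chasing exercise, and the one point requiring care is the interpretation of $\struct\left(\cdot\right)_{k=1}^p$ acting on the tuple of diagonal matrices, so that it denotes $\sum_k \M{E}_k \otimes (\cdot)_k$ with the slice scalings already absorbed (consistent with the definition of $\cM_{\mc{E}}$), rather than re-introducing the $\sqrt{\eta_k}$ factors from \eqref{eqn:kronsum}. With that bookkeeping fixed, the two Kronecker mixed-product reductions and the blockwise diagonality computation are routine.
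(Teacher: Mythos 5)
Your proof is correct and follows essentially the same route as the paper's: both rest on the CP lateral-slice identity $\sq(\TTT_{:,k,:}) = \M{X}\,\diag(\M{Y}_{k,:})\,\M{Z}^\top$ followed by the Kronecker mixed-product property to pull $(\MI_\ell \otimes \M{X})$ and $(\MI_q \otimes \M{Z}^\top)$ out of the sum over $k$, with your two-sided expansion being a cosmetic rather than substantive difference. Your bookkeeping remark is also the right reading: inside $\mc{M}_{\mc{E}}$ the $\struct$ operator acts as $\sum_k \M{E}_k \otimes (\cdot)_k$ with the $\sqrt{\eta_k}$ scalings already absorbed into the tensor slices, exactly as the paper uses it.
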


\begin{proof}
 The Kruskal form of the tensor means that each lateral slice of $\TTT$ must have the form $\M{X} \diag(\M{Y}_{k,:} ) \M{Z}^\top$. Therefore, since 
 \[
 \begin{aligned}
 \mc{M}_{\mc{E}}[{\TTT}] = & \> \sum_{k=1}^p \M{E}_k\otimes \M{X} \diag(\M{Y}_{k,:} ) \M{Z}^\top \\
 = & \> \sum_{k=1}^p (\M{I}_\ell\otimes \M{X})\left(\M{E}_k\otimes  \diag(\M{Y}_{k,:} )\right) (\M{I}_q \otimes \M{Z}^\top) \\
 = & \> ( \MI_\ell \otimes \M{X}) \left( \sum_{k=1}^p \M{E}_k\otimes  \diag(\M{Y}_{k,:})\right) (\MI_q \otimes \M{Z}^\top).
 \end{aligned}
 \] 
 The proof is completed by recognizing the middle term as a block-structured matrix.
\end{proof}

\begin{corollary} \label{corr:kronsumcp}
Given the assumptions and notation in Theorem \ref{th:tildeA}, suppose we can factorize $\MY = \MF \MG^\top$ with $\MF \in\mb{R}^{p\times r}$ and $\M{G} \in \mb{R}^{r\times r}$, then 
 \begin{equation}  \label{eq:kron1} \mc{M}_{\mc{E}}[{\TTT}] = \sum_{j=1}^r \struct \left(\MF_{:,j}\right)_{k=1}^p \otimes \left( \MX \diag (\M{G}_{:,j}) \MZ^\top \right), \end{equation}
 \end{corollary}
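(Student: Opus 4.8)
The plan is to start from the representation of $\mc{M}_{\mc{E}}[\TTT]$ already established in Theorem \ref{th:tildeA} and substitute the factorization $\MY = \MF\MG^\top$, then regroup the result into a sum of $r$ Kronecker products. Rather than begin from the fully factored middle-term form, I would work from the slice-wise expression $\mc{M}_{\mc{E}}[\TTT] = \sum_{k=1}^p \M{E}_k \otimes \left(\MX \diag(\MY_{k,:})\MZ^\top\right)$, which is exactly the first line in the proof of Theorem \ref{th:tildeA} and follows directly from the definition of $\mc{M}_\mc{E}$ together with the CP slice formula \eqref{eq:cplat}. This form is the most convenient starting point because the index $k$ is still exposed as a summation variable.

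The key algebraic step is to rewrite the row-dependent diagonal matrix $\diag(\MY_{k,:})$ using the factorization. Writing $\MY_{k,:} = \MF_{k,:}\MG^\top$, I would verify entrywise the identity
\[ \diag(\MY_{k,:}) = \sum_{j=1}^r [\MF]_{k,j}\,\diag(\MG_{:,j}), \]
that is, the $k$th row's diagonal decomposes as a scalar combination, with weights given by the $k$th row of $\MF$, of the fixed diagonal matrices $\diag(\MG_{:,j})$. This is the heart of the argument: it decouples the dependence on the block index $k$, which is captured entirely by the scalars $[\MF]_{k,j}$, from the dependence on $j$, which is captured by $\diag(\MG_{:,j})$ and is independent of $k$. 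The verification is a one-line entrywise check, since both sides are diagonal and their $(a,a)$ entries each equal $\sum_{j} [\MF]_{k,j}[\MG]_{a,j}$.

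Substituting this identity into the slice-wise expression and using bilinearity of the Kronecker product, I would pull each scalar $[\MF]_{k,j}$ into the $\M{E}_k$ factor and interchange the order of the $k$- and $j$-summations to obtain
\[ \mc{M}_{\mc{E}}[\TTT] = \sum_{j=1}^r \left(\sum_{k=1}^p [\MF]_{k,j}\,\M{E}_k\right) \otimes \left(\MX\diag(\MG_{:,j})\MZ^\top\right). \]
The proof then concludes by recognizing the inner sum over $k$ as $\struct\left(\MF_{:,j}\right)_{k=1}^p$, in the same convention used in Theorem \ref{th:tildeA}, where the scalar ``block'' $[\MF]_{k,j}$ simply multiplies $\M{E}_k$.

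I expect no serious obstacle here; the manipulations are routine properties of Kronecker products and finite sums, and the diagonal identity above is elementary. The only point requiring care is bookkeeping of the $\sqrt{\eta_k}$ weights: the $\struct$ symbol is being used as in Theorem \ref{th:tildeA}, namely $\sum_k \M{E}_k \otimes (\cdot)_k$ \emph{without} the extra $\sqrt{\eta_k}$ scaling that appears in the original definition \eqref{eqn:kronsum}, since those weights were already absorbed into the lateral slices of $\TX$ by the mapping $\cTT_{\mc{E}}$. Keeping this convention consistent is the one thing I would flag explicitly when writing the proof.
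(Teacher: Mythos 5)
Your proposal is correct and follows essentially the same route as the paper: the paper's proof also hinges on the identity $\diag(\MY_{k,:}) = \sum_{j=1}^r f_{kj}\,\diag(\MG_{:,j})$, applied to the middle term of \cref{eq:mystruct} rather than to the slice-wise form $\sum_{k=1}^p \M{E}_k \otimes \bigl(\MX \diag(\MY_{k,:})\MZ^\top\bigr)$, a difference that amounts only to starting one line earlier in the proof of \cref{th:tildeA}. Your flag about the $\sqrt{\eta_k}$ convention in the $\struct$ notation is also consistent with how the paper uses it in \cref{th:tildeA}, where the weights are already absorbed into the lateral slices.
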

is a representation as a sum of Kronecker products of matrices.
\begin{proof}
We have $\diag(\M{Y}_{k,:}) = \diag( \sum_{j=1}^r f_{kj} \MG_{:,j}^\top) = \sum_{j=1}^r f_{kj} \diag( \MG_{:,j} ) $.
Hence, 
$$\struct\left( \diag (\MY_{k,:})\right)_{k=1}^{p}  = \sum_{j=1}^r \struct\left(\MF_{:,j}\right)_{k=1}^p \otimes \diag(\MG_{:,j}).$$    
 Putting this last line together with \cref{eq:mystruct} gives \cref{eq:kron1}.

\end{proof}

The important point about \cref{eq:kron1} is that we now have
a matrix approximation $\widehat{\MA} \approx \M{A}$ as follows 
\[ \widehat{\MA} = \sum_{j=1}^r \M{C}_j \otimes \M{D}_j, \]
where $\M{C}_j = \struct \left(\MF_{:,j}\right)_{j=1}^p$ and $\M{D}_j = \MX \diag (\M{G}_{:,j}) \MZ^\top$ for $j=1,\dots,r$. This decomposition into a sum of
Kronecker products of matrices, is all the more noteworthy since the $\M{C}_j$ have the same
{\it  block structure} as the original matrix.  See
\cref{fig:kroneckersum} for an illustration.

\begin{remark} If additional structure is imposed on the CP decomposition, this could result in additional structure for the corresponding matrix approximation.  For example, if $\MA$ is non-negative, so that the tensor is non-negative, the CP factors should be non-negative, implying the approximation will also have non-negative entries.  
\end{remark}

\subsection{Matrix structure using Tucker}
Recall that we denote tensor $\TX := {\cTT}_\mc{E}[\M{A}] \in \mb{R}^{m \times p \times n}$ is approximated by the tensor $ \TTT  := \widehat{\cTT}_\mc{E}[\M{A}]$  such that
\[\TTT = \T{G} \times_1 \MU \times_2 \MV \times_3 \MW,\]
with core tensor $\T{G} \in\mb{R}^{r_1\times r_2\times r_3}$
and the factor matrices
$\MU \in \mb{R}^{m \times r_1}$, $\MV \in \mb{R}^{p \times r_2}$, and $\MW \in \mb{R}^{n \times r_3}$ typically have orthonormal columns. There are several methods for computing low-rank approximations in the Tucker format. Some options include Higher Order SVD (HOSVD), Sequentially Truncated HOSVD (ST-HOSVD),  and Higher Order Orthogonal Iteration (HOOI). See~\cite{kolda2009tensor,hackbusch2012tensor} for detailed reviews of the various techniques. For large-scale tensors, many randomized variants have been devised in recent years~\cite{che2019randomized,minster2020efficient,ahmadi2020randomized} to reduce the computational cost. In this next result, we show how to convert the approximation in the Tucker form to obtain an approximation as the sum of Kronecker products.
\begin{theorem}\label{thm:tuckerkron} Consider the setting as in \cref{th:tildeA} but with the approximation in the Tucker format $\TTT =  \widehat{\cTT}_\mc{E}[\M{A}] = \T{G} \times_1 \MU \times_2 \MV \times_3 \MW$, as defined above. Then
\begin{eqnarray} \label{eq:structTucker}
  \mc{M}_{\mc{E}}[\TTT] & = &  \sum_{j=1}^{r_2} \struct\left(v_{kj}\right)_{k=1}^p \otimes (\MU \sq(\T{G}_{:,j,:}) \MW^\top).            
 \end{eqnarray}
\end{theorem}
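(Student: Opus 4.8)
The plan is to follow the same template as the proof of \cref{th:tildeA}, but replacing the CP slice identity with the Tucker lateral-slice identity \eqref{eq:klatslice}. I begin from the definition of the tensor-to-matrix map, $\mc{M}_{\mc{E}}[\TTT] = \sum_{k=1}^p \M{E}_k \otimes \sq(\TTT_{:,k,:})$, which expresses the reconstructed matrix as a Kronecker sum over the structure matrices $\M{E}_k$ weighted by the lateral slices of the tensor approximation. This is the natural entry point because it reduces the whole statement to understanding a single lateral slice of the Tucker tensor.

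Next I would substitute the expression for the $k$th lateral slice of a Tucker tensor given in \eqref{eq:klatslice}, namely $\sq(\TTT_{:,k,:}) = \MU\bigl(\sum_{j=1}^{r_2} v_{kj}\,\sq(\T{G}_{:,j,:})\bigr)\MW^\top$. Plugging this in and distributing the inner sum, the reconstruction becomes a double sum $\sum_{k=1}^p \sum_{j=1}^{r_2} \M{E}_k \otimes \bigl(v_{kj}\,\MU\,\sq(\T{G}_{:,j,:})\,\MW^\top\bigr)$.

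The key algebraic step is then to interchange the order of summation and group by $j$. Because $v_{kj}$ is a scalar it passes freely through the Kronecker product, so $\M{E}_k \otimes (v_{kj}\,\MM) = (v_{kj}\M{E}_k)\otimes \MM$; and because the matrix $\MU\,\sq(\T{G}_{:,j,:})\,\MW^\top$ does not depend on the summation index $k$, I can factor it out of the inner sum over $k$. This leaves $\sum_{j=1}^{r_2}\bigl(\sum_{k=1}^p v_{kj}\M{E}_k\bigr)\otimes\bigl(\MU\,\sq(\T{G}_{:,j,:})\,\MW^\top\bigr)$, and recognizing $\sum_{k=1}^p v_{kj}\M{E}_k$ as $\struct\left(v_{kj}\right)_{k=1}^p$ yields exactly \eqref{eq:structTucker}.

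There is no genuine obstacle here: the argument is a direct computation resting only on the bilinearity of the Kronecker product and on the identity \eqref{eq:klatslice}, which itself follows from the mode-3 unfolding relation \eqref{eq:mode3flatkron}. The only point requiring care is bookkeeping---ensuring the scalar $v_{kj}$ is correctly commuted through $\otimes$ and that the summation indices are swapped without altering the index-free inner matrix factor---so that each of the $r_2$ outer summands appears as a single Kronecker product of a structured $\ell \times q$ coefficient matrix $\struct\left(v_{kj}\right)_{k=1}^p$ with the rank-compressed block $\MU\,\sq(\T{G}_{:,j,:})\,\MW^\top$.
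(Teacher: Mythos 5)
Your proposal is correct and follows essentially the same route as the paper's proof: both substitute the lateral-slice identity \eqref{eq:klatslice} (derived from the mode-3 unfolding \eqref{eq:mode3flatkron}) into the definition of $\mc{M}_{\mc{E}}[\TTT]$, interchange the sums over $k$ and $j$, and identify $\sum_{k=1}^p v_{kj}\M{E}_k$ with $\struct\left(v_{kj}\right)_{k=1}^p$. If anything, your write-up states the scalar-through-Kronecker step $\M{E}_k \otimes (v_{kj}\M{M}) = (v_{kj}\M{E}_k)\otimes \M{M}$ more cleanly than the paper's intermediate display, which contains a typographical slip.
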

\begin{proof}
From~\cref{eq:mode3flatkron}, the mode-3 unfolding of $\TTT$ has the expression $\TTT_{(3)} = \MU \T{G}_{(3)} (\MV^\top \otimes \MW^\top)$. It follows that each lateral slice of $\TTT$ is (see also~\cref{eq:klatslice})
\[ \sq(\TTT_{:,k,:}) = \MU \left( \sum_{j=1}^{r_2} v_{kj} \sq(\T{G}_{:,j,:} ) \right) \MW^\top .\]
Applying the definition of the matrix-to-tensor mapping
\begin{eqnarray*}
  \mc{M}_{\mc{E}}[\widehat{\cTT}_\mc{E}[\M{A}]] & = &        \struct \left(\sq(\TTT_{:,k,:}) \right)_{k=1}^p   = \sum_{k=1}^p \M{E}_k \otimes \sq(\TTT_{:,k,:}) \\
 & = & \sum_{k=1}^p \M{E}_k \otimes \left(\sum_{j=1}^{r_2} \struct\left(v_{kj}\right)_{k=1}^p \otimes (\MU \sq(\T{G}_{:,j,:}) \MW^\top) \right).
 \end{eqnarray*}
 Interchanging the order of the summations, we have 
 \begin{eqnarray*}
 \mc{M}_{\mc{E}}[\widehat{\cTT}_\mc{E}[\M{A}]] & = & \sum_{j=1}^{r_2} \sum_{k=1}^p \M{E}_k \otimes v_{kj} (\MU \sq(\T{G}_{:,j,:}) \MW^\top) \\ 
 & = & \sum_{j=1}^{r_2} \left(\sum_{k=1}^p \M{E}_k \otimes v_{kj}\right) \otimes  (\MU \sq(\T{G}_{:,j,:}) \MW^\top).
 \end{eqnarray*}
 The proof is complete by identifying $ \sum_{k=1}^p \M{E}_k \otimes v_{kj} = \struct \left( v_{kj}\right)_{k=1}^p$.
\end{proof}
Using this theorem, we can compute the Kronecker product representation
 \[\MA \approx \sum_{j=1}^{r_2}\M{C}_j \otimes \M{D}_j, \]
 where $ \M{C}_j = \struct \left(v_{kj}\right)_{k=1}^p$ and $\M{D}_j = \MU \sq(\T{G}_{:,j,:}) \MW^\top$ for $j=1,\dots,r_2$. It is interesting to note that the matrices $\M{C}_j$ preserve the original block structure of the matrix; for example, if $\M{A}$ is block-Toeplitz, then the matrices $\{\M{C}_j\}_{j=1}^{r_2}$ are Toeplitz. This is similar to what we observed in the CP case (see \cref{fig:kroneckersum}). 

We can take this analogy a step further. The CP decomposition $\TTT = \llbracket\M{X},\M{Y},\M{Z} \rrbracket$ can be written in Tucker format as $\TTT = [\T{I}; \M{X},\M{Y},\M{Z}]$ where $\T{I}$ is the identity tensor with $\T{I}_{ijk} = 1$ if $i=j=k$ else $0$. Note that in writing in the Tucker format we are not enforcing that the factor matrices have orthonormal columns. Applying \cref{thm:tuckerkron} with the Tucker form of $\TTT$ we obtain a result similar to \cref{corr:kronsumcp}.

\section{Block structured factorizations}\label{sec:blockstructured} 
In the previous section, we approximated $\M{A}$ as a sum of structured Kronecker products. In this subsection, we show how the tensor structure can be used to derive block-structured low-rank factorizations. Previous work in~\cite{savas2011clustered,savas2016clustered,wang2019block,amestoy2017complexity} also considered block low-rank factorizations but our work is novel in two different ways: we consider the additional structure present in the matrix and we leverage tensor-based algorithms to exploit this structure.

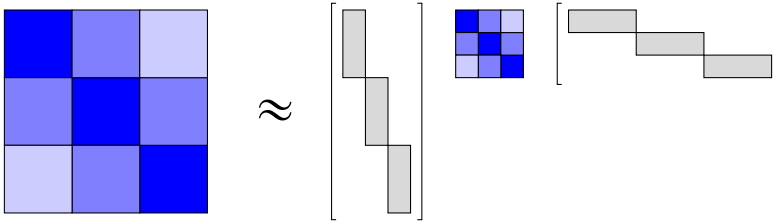
\begin{figure}[!ht]\centering
\begin{tikzpicture}[scale=0.3]
 \tikzmath{\h = 3; \w = 1;}
\tikzmath{\l = 0; \t = 3*\h; }
\foreach \x in {0,\h,2*\h}{
\filldraw[fill=blue, draw=black] (\l+ \x, \t - \x -\h ) rectangle ++(\h,\h);
}

\foreach \x in {\h,2*\h}{
\filldraw[fill=blue!50, draw=black] (\l+ \x, \t - \x) rectangle ++(\h,\h);
\filldraw[fill=blue!50, draw=black] (\l+ \x-\h , \t - \x -\h) rectangle ++(\h,\h);
}

\filldraw[fill=blue!20, draw=black] (\l+2*\h, \t -1*\h) rectangle ++(\h,\h);
\filldraw[fill=blue!20, draw=black] (\l, \t -3*\h) rectangle ++(\h,\h);
\draw (\l + 4*\h, 1.5*\h) node[fill=none,scale=2]   {$\approx$};  

 \tikzmath{\l = 5*\h; \t = 3*\h;}
\draw (\l-0.3*\w,-0.3*\w) -- (\l-0.5*\w,-0.3*\w) --  (\l-0.5*\w,\t+0.3*\w) -- (\l-0.3*\w,\t+0.3*\w);
\filldraw[fill=gray!30!white, draw=black] (\l,2*\h) rectangle ++(\w,\h);
\filldraw[fill=gray!30!white, draw=black] (\l+\w,\h) rectangle ++(\w,\h);
\filldraw[fill=gray!30!white, draw=black] (\l+2*\w,0) rectangle ++(\w,\h);
\draw (\l +3.3*\w,-0.3*\w) -- (\l+3.5*\w,-0.3*\w) --  (\l+3.5*\w,\t+0.3*\w) -- (\l +3.3*\w,\t+0.3*\w);

 \tikzmath{\l = \l +  4*\w; \t = 3*\h;}
\foreach \x in {1,2,3}{
\filldraw[fill=blue, draw=black] (\l+ \x, \t - \x) rectangle ++(\w,\w);
}

\foreach \x in {1,2}{
\filldraw[fill=blue!50, draw=black] (\l+ \x + \w, \t - \x) rectangle ++(\w,\w);
\filldraw[fill=blue!50, draw=black] (\l+ \x , \t - \x -1*\w) rectangle ++(\w,\w);
}

\filldraw[fill=blue!20, draw=black] (\l+3*\w, \t -1*\w) rectangle ++(\w,\w);
\filldraw[fill=blue!20, draw=black] (\l+1*\w, \t -3*\w) rectangle ++(\w,\w);

 \tikzmath{\l = \l + 6*\w; \t = 3*\h;}
\draw (\l-0.3*\w,\t-3.3*\w) -- (\l-0.5*\w,\t-3.3*\w) --  (\l-0.5*\w,\t+0.3*\w) -- (\l-0.3*\w,\t+0.3*\w);
\filldraw[fill=gray!30!white, draw=black] (\l,\t-1*\w) rectangle ++(\h,1);
\filldraw[fill=gray!30!white, draw=black] (\l+\h,\t-2*\w) rectangle ++(\h,1);
\filldraw[fill=gray!30!white, draw=black] (\l+2*\h,\t-3*\w) rectangle ++(\h,1);
\draw (\l + 9.3*\w,\t-3.3*\w) -- (\l+9.5*\w,\t-3.3*\w) --  (\l+9.5*\w,\t+0.3*\w) -- (\l +9.3*\w,\t+0.3*\w);
\end{tikzpicture}
\caption{Visualization of the block-structured matrix approximation.}
\label{fig:blockstructure}
\end{figure}

\paragraph{CP decomposition} First, we show how to express the tensor approximation computed in Kruskal format as a block-structured factorization.  We assume, once again that we have the CP decomposition $\TTT =  \llbracket \M{X}, \M{Y},\M{Z} \rrbracket$. Here, we are assuming that the number of terms in the CP approximation, $r$, is small relative to the problem dimension, in which case the following theorem can have important practical consequences. 
\begin{theorem}
Consider the notation and assumptions in \cref{th:tildeA}. Furthermore, let $\M{X} = \M{Q}_X \M{R}_X$ and $\M{Z} = \M{Q}_Z\M{R}_Z$ be the respective thin QR factorizations. Then, 
\begin{equation} \label{eq:blockCP} \mc{M}_{\mc{E}}[{\TTT}] = ( \MI_\ell \otimes \M{Q}_X) \struct \left( \M{F}_k  \right)_{k=1}^p (\MI_q \otimes \M{Q}_Z^\top), \end{equation}
where $\M{F}_k = \M{R}_X\diag(\MY_{k,:}) \M{R}_Z^\top \in \mb{R}^{r\times r}$ for $k=1,\dots,p$. 
\end{theorem}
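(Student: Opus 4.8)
The plan is to start from the slice-wise formula already furnished by \cref{th:tildeA}: because $\TTT = \llbracket \MX, \MY, \MZ \rrbracket$ is in Kruskal form, each lateral slice satisfies $\sq(\TTT_{:,k,:}) = \MX \diag(\MY_{k,:}) \MZ^\top$, and the matrix-to-tensor mapping gives $\mc{M}_{\mc{E}}[\TTT] = \sum_{k=1}^p \M{E}_k \otimes \sq(\TTT_{:,k,:})$. The entire argument is then a matter of rewriting each slice via the thin QR factorizations and pulling the orthogonal factors outside the Kronecker sum, reusing the same factor-out-the-constants maneuver that appears in the proof of \cref{th:tildeA}.

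First I would insert $\MX = \M{Q}_X \M{R}_X$ and $\MZ = \M{Q}_Z \M{R}_Z$ into the slice formula to obtain
\[ \sq(\TTT_{:,k,:}) = \M{Q}_X \left( \M{R}_X \diag(\MY_{k,:}) \M{R}_Z^\top \right) \M{Q}_Z^\top = \M{Q}_X \M{F}_k \M{Q}_Z^\top, \]
which defines $\M{F}_k$ exactly as in the statement. Since the QR factorizations are thin and $\MX \in \mb{R}^{m\times r}$, $\MZ \in \mb{R}^{n\times r}$ have $r$ columns, we have $\M{R}_X, \M{R}_Z \in \mb{R}^{r\times r}$, so $\M{F}_k \in \mb{R}^{r\times r}$ as claimed, while $\M{Q}_X \in \mb{R}^{m\times r}$ and $\M{Q}_Z \in \mb{R}^{n\times r}$.

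The key algebraic step is the Kronecker mixed-product property (property 2 in the list of Kronecker-product properties). Writing $\M{E}_k = \MI_\ell \M{E}_k \MI_q$ and applying that property twice gives
\[ \M{E}_k \otimes (\M{Q}_X \M{F}_k \M{Q}_Z^\top) = (\MI_\ell \otimes \M{Q}_X)(\M{E}_k \otimes \M{F}_k)(\MI_q \otimes \M{Q}_Z^\top). \]
Because the outer factors $\MI_\ell \otimes \M{Q}_X$ and $\MI_q \otimes \M{Q}_Z^\top$ do not depend on $k$, they factor out of the summation over $k$, leaving $\sum_{k=1}^p \M{E}_k \otimes \M{F}_k$ sandwiched in the middle. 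Identifying this sum as $\struct(\M{F}_k)_{k=1}^p$ from the definition of the $\struct$ operator then yields \cref{eq:blockCP}.

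The step requiring the most care is the dimensional bookkeeping in the mixed-product property: one must pair $\MI_\ell$ on the left and $\MI_q$ on the right with $\M{E}_k \in \mb{R}^{\ell\times q}$, and verify that the dimensions of $\M{Q}_X$ and $\M{Q}_Z$ make all the products conformable. Beyond this bookkeeping the calculation is routine, and no genuine obstacle remains, since the QR substitution and the Kronecker identities do all the work.
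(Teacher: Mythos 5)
Your proposal is correct and follows essentially the same route as the paper: the paper's proof likewise substitutes the thin QR factorizations into the lateral-slice formula $\sq(\TTT_{:,k,:}) = \M{Q}_X\M{F}_k\M{Q}_Z^\top$ and then invokes the factor-out argument of \cref{th:tildeA} (the mixed-product identity applied termwise, with the $k$-independent Kronecker factors pulled outside the sum). You have merely written out explicitly the steps the paper delegates to ``the rest of the proof follows the proof of \cref{th:tildeA},'' including the correct dimensional bookkeeping.
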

\begin{proof}
Each lateral slice of $\TTT$ has the form $\M{X} \diag(\M{Y}_{k,:} ) \M{Z}^\top = \M{Q}_X \M{F}_k\M{Q}_Z^\top$. The rest of the proof follows the proof of~\cref{th:tildeA}.
\end{proof}

In this representation, the outer terms have orthonormal columns of sizes $(\ell m)\times (\ell r)$ and $(rq)\times (nq)$, whereas the middle term has block rank structure of size $(\ell r)\times (qr)$.  See \cref{fig:blockstructure} for an illustration, assuming that $r$ is relatively small.

\paragraph{Tucker format} Suppose we have the tensor approximation in the Tucker format $\TTT = [\T{G};\M{U},\M{V},\M{W}]$. The block-structured factorization is evident from observing
that the expression in \cref{eq:structTucker} can be equivalently expressed
\begin{equation} \label{eq:blockTuck} \cM_{\mc{E}}[\TTT] =  (\MI_\ell \otimes \MU) \struct \left(  \sum_{j=1}^{\ell} v_{kj} \sq(\T{G}_{:,j,:})  \right)_{k=1}^p  (\MI_q \otimes \MW^\top).\end{equation}
In this representation, the outer terms have orthonormal columns  and have sizes $(\ell m)\times (\ell r_1)$ and $(r_3q)\times (nq)$, whereas the middle term has block rank structure of size $(\ell r_1)\times (qr_3)$.

Supposing the block structured matrix $\M{A}$ is symmetric positive definite (SPSD) or symmetric positive semidefinite (SPSD), we want the approximation obtained using the tensor compression to also be SPD or SPSD, respectively. We now show how to do this in the context of block-structured factorizations.
\subsection{Preserving positive semidefiniteness}\label{ssec:spsd}

Let $\M{A} \in \mb{R}^{n\ell \times n\ell}$ be SPSD and let $\mc{A}$, $\mc{W}$, and $\cTT_{\mc{E}}[\M{A}] = \TX \in \mb{R}^{n\times \ell \times n}$ be defined as before. Note that since $\M{A}$ is symmetric we have $\ell = q$ and $m=n$, and its diagonal blocks are symmetric. Furthermore, each matrix in $\mc{A}$ (i.e. the subblocks of $\M{A}$) is either symmetric or its transpose is also contained in $\mc{A}$. For example, consider the block Toeplitz matrix Example 3 in \cref{sec:mattotens}; if the block Toeplitz matrix is symmetric, then this implies $\M{A}_1$ is symmetric and $\M{A}_{\ell+k-1} = \M{A}_k^\top$ for $k=2,\dots,\ell$. This simple observation has the following important consequence: the mode-$1$ and mode-$3$ unfoldings of $\cTT_{\mc{E}}[\M{A}]$ have the same range (see \cref{eq:mode3unfold} and the discussion preceding it), and therefore, we can use the same factor matrix to compress across each of these modes. That is, we compute a two-sided compression of the form
\[ \cTT_{\mc{E}}[\M{A}] \approx \TTT = \T{G} \times_1 \M{U} \times_3 \M{U}, \qquad \T{G} = \TTT \times_1 \M{U}^\top \times_3 \M{U}^\top, \]
where $\M{U} \in \mb{R}^{n\times r}$ has orthonormal columns. We can compute $\M{U}$, for example, from the dominant $r$ left singular vectors of either the mode-$1$ or mode-$3$ unfolding of $\TT_{\mc{E}}[\MA]$. The corresponding block-structured approximation to $\M{A}$ is 
\[ \M{A} \approx \mc{M}_{\mc{E}} [\TTT] = (\MI\otimes \M{U}) \struct\left( \M{U}^\top\M{A}_k\M{U}\right)_{k=1}^p(\MI\otimes \M{U}^\top). \]
A straightforward rearrangement shows that 
\[\mc{M}_{\mc{E}} [\TTT] = (\M{I}\otimes \M{P}) \M{A} (\M{I}\otimes \M{P})\] 
where $\M{P} = \M{U}\M{U}^\top$ is an orthogonal projector, since $\M{U}$ has orthonormal columns. Since $\M{A}$ is SPSD, it is easy to verify that its approximation $\mc{M}_{\mc{E}} [\TTT]$ is also SPSD. Note that if $\M{A}$ is symmetric (but not necessarily SPSD), we can use a Tucker compression of the form 
\[ \TTT = \T{G} \times_1 \M{U}  \times_2 \M{V} \times_3 \M{U}, \qquad \T{G} = \TTT \times_1  \M{U}^\top\times_2 \M{V}^\top \times_3 \M{U}^\top, \]
where $\M{V}$ has orthonormal columns. The resulting approximation $\mc{M}_{\mc{E}}[\TT]$ is symmetric but does preserve definiteness (if present in the original block matrix).

\subsection{Preserving positive definiteness}\label{ssec:spd} Suppose $\MT \in \mb{R}^{n\ell \times n\ell}$ is symmetric and positive definite (SPD), and we want our block-structured approximation to also be SPD. Our approach is inspired by Method 2 in~\cite{xing2018preserving}, which treats the diagonal blocks in a special way. Assume we can decompose $\MT$ as 
 \[ \MT = \MI_{\ell} \otimes \MT_0 + \sum_{k=1}^{p} \M{E}_k \otimes \MT_k,\]
 where $\MT_0$ is SPD. Such a decomposition is guaranteed since at least one diagonal block of $\MT$ is SPD. Let $\MT_0 = \ML\ML^\top$ be its Cholesky factorization. By factoring out $\MT_0$ from the  diagonal blocks, we get 
 \[ \MT = (\MI\otimes \ML) \left(\MI + \MA \right)(\MI\otimes \ML^\top), \qquad \MA = \sum_{k=1}^{p} \M{E}_k \otimes {\MA}_k ,  \]
 where $\MA_k = \ML^{-1}\MT_k\ML^{-\top} $  for $k=1,\dots,p$. Since $\M{T}$ is SPD, so is $\M{I} + \M{A}$.

 Assume that we define $\mc{A}$, $\mc{E}$, and $\cTT_{\mc{E}}[\M{A}] = \TX \in \mb{R}^{n\times \ell \times n}$ as before. Once again, we consider the tensor approximation along modes 1 and 3
 \[ \widehat{\cTT}_{\mc{E}}[\M{A}] = \TTT =  \T{G} \times_1 \M{U} \times_3 \M{U}, \qquad \T{G} = \TT \times_1 \M{U}^\top \times_3 \M{U}^\top. \]
 Therefore, this gives the matrix approximation
 \[ \mc{M}_{\mc{E}}[\TT] = (\MI\otimes \MU) \struct\left(\MU^\top\MA_k\MU\right)_{k=1}^p (\MI\otimes \MU^\top).\]
 Finally, we can then approximate $\M{T}$ as 
\[ \M{T}  \approx \widehat{\MT} :=  (\MI\otimes \ML) \left(\MI + \mc{M}_{\mc{E}}[\TT] \right)(\MI\otimes \ML^\top). \]
 In the next theorem, we show that the approximation as constructed above is also SPD. 
\begin{theorem}
Let $\MT \in \mb{R}^{n\ell \times n\ell}$ be SPD. The approximation $\widehat{\MT} \approx \MT$ constructed, as before, is SPD.
\end{theorem}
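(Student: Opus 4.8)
The plan is to reduce the claim to a congruence followed by a short quadratic-form estimate. First I would note that, since $\MT_0$ is SPD, its Cholesky factor $\ML$ is invertible, and hence $\MI\otimes\ML$ is invertible as a Kronecker product of invertible matrices. The construction writes $\widehat{\MT} = (\MI\otimes\ML)\left(\MI + \mc{M}_{\mc{E}}[\TT]\right)(\MI\otimes\ML^\top)$, which exhibits $\widehat{\MT}$ as the congruence transform of $\MI + \mc{M}_{\mc{E}}[\TT]$ by $\MI\otimes\ML$. Because congruence by an invertible matrix preserves symmetry and inertia (Sylvester's law of inertia)---or directly, for any nonzero $\Vzz$ the substitution $\Vv = (\MI\otimes\ML^\top)\Vzz$ is again nonzero and $\Vzz^\top\widehat{\MT}\Vzz = \Vv^\top(\MI+\mc{M}_{\mc{E}}[\TT])\Vv$---it suffices to show that $\MI + \mc{M}_{\mc{E}}[\TT]$ is SPD.

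For that I would reuse the rearrangement established in the positive semidefinite case treated above: since the mode-$1$ and mode-$3$ compressions share the same factor $\MU$, the matrix approximation collapses to $\mc{M}_{\mc{E}}[\TT] = (\MI\otimes\MP)\MA(\MI\otimes\MP)$, where $\MP = \MU\MU^\top$ is an orthogonal projector. Setting $\MQ = \MI\otimes\MP$, the matrix $\MQ$ is itself an orthogonal projector ($\MQ^\top = \MQ$ and $\MQ^2 = \MQ$), hence symmetric and nonexpansive. Symmetry of $\MI + \MQ\MA\MQ$ then follows from the symmetry of $\MA$ (it is a congruence of the symmetric matrix $\sum_{k=1}^p\M{E}_k\otimes\MT_k$), so only strict definiteness remains to be checked.

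The heart of the argument is the following estimate. Fix a nonzero $\Vzz$ and put $\Vv = \MQ\Vzz$, so that $\Vzz^\top(\MI+\MQ\MA\MQ)\Vzz = \|\Vzz\|_2^2 + \Vv^\top\MA\Vv$. If $\Vv = 0$ this equals $\|\Vzz\|_2^2 > 0$. If $\Vv\neq 0$, the hypothesis that $\MI+\MA$ is SPD gives $\Vv^\top\MA\Vv > -\|\Vv\|_2^2$, while the nonexpansiveness of the orthogonal projector gives $\|\Vv\|_2 = \|\MQ\Vzz\|_2 \le \|\Vzz\|_2$; combining these, $\Vzz^\top(\MI+\MQ\MA\MQ)\Vzz > \|\Vzz\|_2^2 - \|\Vv\|_2^2 \ge 0$. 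Hence $\MI + \mc{M}_{\mc{E}}[\TT]$ is SPD, and by the congruence of the first step so is $\widehat{\MT}$.

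I expect the main obstacle to be exactly this last estimate: a careless bound on $\Vv^\top\MA\Vv$ only delivers positive semidefiniteness, and the reason strict positivity survives is the projector inequality $\|\MQ\Vzz\|_2 \le \|\Vzz\|_2$, which guarantees the negative perturbation $-\|\Vv\|_2^2$ can never cancel the entire $\|\Vzz\|_2^2$ unless $\Vv$ itself vanishes---in which case the perturbation contributes nothing. Everything else (invertibility of $\MI\otimes\ML$, the Kronecker collapse to $(\MI\otimes\MP)\MA(\MI\otimes\MP)$, and the symmetry claims) is routine bookkeeping inherited from the earlier construction.
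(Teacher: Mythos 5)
Your proposal is correct and takes essentially the same route as the paper: reduce by the invertible congruence $(\MI\otimes\ML)(\cdot)(\MI\otimes\ML^\top)$ to showing that $\MI + \mc{M}_{\mc{E}}[\TT] = \MI + \M{\Pi}\MA\M{\Pi}$ is SPD, then exploit the positive definiteness of $\MI+\MA$ together with the projector structure of $\M{\Pi}=\MI\otimes\M{U}\M{U}^\top$. Your nonexpansiveness estimate $\|\M{\Pi}\Vzz\|_2\le\|\Vzz\|_2$ is the paper's orthogonal splitting $\V{x}=\V{x}_1+\V{x}_2$ in disguise, since $\|\Vzz\|_2^2-\|\M{\Pi}\Vzz\|_2^2=\|\V{x}_2\|_2^2$, so your quadratic-form computation $\|\Vzz\|_2^2+\Vv^\top\MA\Vv$ coincides term by term with the paper's $\V{x}_2^\top\V{x}_2+\V{x}_1^\top(\MI+\MA)\V{x}_1$.
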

\begin{proof}
It suffices to show that $\MI + \mc{M}_{\mc{E}}[\TT]$ is SPD. Define the orthogonal projectors $\M{\Pi} = \MI \otimes \M{P}$ where $\M{P} = \M{UU}^\top$, and  $\M{\Pi}_\perp = \MI \otimes \M{P}_\perp$ where $\M{P}_\perp = \MI-\M{P} = \MI - \M{UU}^\top$. With this notation, we can write 
\[  \M{I} + \mc{M}_{\mc{E}}[\TT] = \M{I} + \M{\Pi} \MA \M{\Pi} = \M{\Pi}_\perp +  \M{\Pi} (\M{I} + \MA) \M{\Pi}. \]
Consider a nonzero vector $\V{x}\in\mb{R}^{n\ell}$ and decompose it uniquely as  $\V{x} = \V{x}_1 + \V{x}_2$ where $\V{x}_1 \in \mc{R}(\M{\Pi})$ and $\V{x}_2 \in \mc{R}(\M{\Pi}_\perp)$. The quadratic form simplifies as 
\[ \V{x}^\top(\M{I} + \mc{M}_{\mc{E}}[\TT]) \V{x}= \V{x}_2^\top\V{x}_2 + \V{x}_1^\top(\MI + \MA)\V{x}_1.\]
This is clearly nonnegative; furthermore, since $(\MI + \MA)$ is SPD, at least one of the two terms in the summation is positive. Therefore, $\M{I} + \mc{M}_{\mc{E}}[\TT]$ is SPD.
\end{proof}

It is interesting to note that we can rearrange the expression for $\widehat{\MT}$ to obtain 
\[\widehat{\MT} = \MI \otimes \MT_0 + (\MI \otimes \ML\M{P}\ML^{-1}) \struct\left(\MT_k\right)_{k=1}^p (\MI \otimes \ML^\top\M{P}\ML^{-\top}),\]
where $\ML\MP\ML^{-1} = \ML\M{UU}^\top\ML^{-1}$ is an orthogonal projector with the respect to the $\langle \V{x}, \V{y} \rangle_{\M{T}_0^{-1}} = \V{x}^\top\M{T}_0^{-1}\V{y}$ inner-product.

\section{Applications and Numerical Experiments} \label{sec:apps}
In this section, we demonstrate the utility of our formulation and the performance of the various algorithms proposed in the previous sections. The first application (\cref{ssec:ssid}) we consider is accelerating the computation of the eigensystem realization algorithm, which involves large block-Hankel matrices. The second application (\cref{ssec:spacetime}) arises from space-time covariance matrices and involve block-Toeplitz matrices. In \cref{ssec:blocktrid} we obtain Kronecker sum approximations to several sparse test matrices obtained from the SuiteSparse matrix collection. 

\subsection{Subspace System identification}\label{ssec:ssid}
Consider the linear time-invariant discrete dynamical system 
\[ \begin{aligned}\V{x}_{k+1} = & \> \M{A}\V{x}_k + \M{B}\V{u}_k \\ \V{y}_{k+1} = &\> \M{C}\V{x}_{k+1} + \M{D}\V{u}_{k+1} . \end{aligned}\]
Here $\M{A} \in \mb{R}^{d\times d}$ is the state transition matrix, $\M{B}\in \mb{R}^{d\times n}$ is the input matrix, $\M{C} \in \mb{R}^{m\times d}$ is the output matrix and $\M{D} \in \mb{R}^{m\times n}$ is the feedthrough matrix 
The goal in system identification is to recover the system matrices $(\M{A},\M{B},\M{C},\M{D})$ (up to a certain similarity transformation) governing the dynamics of the system from the inputs $\{\V{u}_k\}$ and outputs $\{\V{y}_k\}$. When the inputs are of the impulse type, a special algorithm known as the Eigensystem Realization Algorithm (ERA) can be used for system identification~\cite{kung1978new,juang1985eigensystem}. 

In this setting, we are given, as data, the {\em Markov parameters}
\begin{equation}
    \M{h}_{k} = \left\{\begin{array}{ll}
        \M{D} & k = 0  \\
        \M{C}\M{A}^{k-1}\M{B} & k > 0. 
    \end{array} \right. 
\end{equation}
Recovering the system matrix $\M{D}$ is straightforward since it is the first Markov parameter. In what follows, we focus on recovering $\M{A},\M{B},$ and $\M{C}$. 
Then, a block-Hankel matrix is formed with the Markov parameters such that there are $s$ block rows and columns of size $m \times n$ each:
\[ \MH_s = \bea \MCC \MB & \MCC \MA \MB & \cdots & \MCC \MA^{s-1} \MB \\
                     \MCC \MA \MB & \MCC \MA^2 \MB & \cdots & \MCC \MA^{s} \MB \\
                      \vdots & \vdots & \ddots & \vdots \\
                     \MCC \MA^{s-1} \MB & \MCC \MA^{s} \MB & \cdots & \MCC \MA^{2s-2} \MB \eea  \in \mb{R}^{(ms)\times (ns)}.\]
The value of $s$ might also be large in practice, with $s \approx \mc{O}(10^3 - 10^5)$. A review of the computational challenges involved in storing and factorizing $\MH_s$ are reviewed in~\cite{minster2020efficient}.

 We offer a brief derivation of ERA here. Assuming that the system is observable and controllable (see~\cite[Lemma 3.5]{verhaegen2007filtering} for more details), we have $\text{rank}(\MH_s)=d$ and  we can factorize $\MH_s$ as
\[ \MH_s = \underbrace{\bea \MCC \\ \MCC \MA \\ \vdots \\ \MCC \MA^{s-1} \eea}_{\M{O}_s} \underbrace{[ \MB, \MA \MB, \cdots, \MA^{s-1} \MB ]}_{\M{C}_s},\]
where $\M{O}_s$ is the observability matrix and $\M{C}_s$ is the controllability matrix, which both have rank $n$. The trick is to partition the observability matrix $\M{O}_s$
\[ \M{O}_s^{(f)} {\MA} = \M{O}_s^{(b)} \]
where $\M{O}_s^{(f)}$ is the leading $ms-m$ rows of $\M{O}_{s}$ and $\M{O}_s^{(b)}$ contains rows $m+1$ to $ms$ of $\M{O}_s$. We can recover the matrix $\MA$ (up to a similarity transformation) by solving this problem in the least squares sense. That is, we write 
\[ \tilde{\MA} = [\M{O}_s^{(f)}]^\dagger \M{O}_s^{(b)}.\]
In practice, the observability and controllability matrices are not explicitly available, but we can instead work with a factorization of the form $\MH_s = \M\Theta_s \M\Gamma_s^\top$, where $\M\Theta_s$ has the same dimensions as $\M{O}_s$. For example, we can construct such a factorization using the reduced SVD. Furthermore, if we desire model reduction in addition to system identification, we work with the SVD of $\MH_s$ truncated to target rank $r < d$.

Once the state transition matrix $\MA$ is recovered, we can also recover the matrices $\M{B}$ and $\M{C}$. The details are available in other sources (e.g.,~\cite{verhaegen2007filtering,minster2020efficient}). The step involving the computation of the SVD of $\MH_s$ dominates the overall computational cost, since it is cubic in the number of time points $s$. Other methods to mitigate this computational cost are available in~\cite{minster2020efficient,kramer2016tangential}, etc. In this section, we exploit the tensor-based computational framework for computing structured matrix approximations developed in \cref{sec:kron}.

\paragraph{Matrix-to-tensor mapping} We now show how to map the structured block matrix $\MH_s$ into tensor form for compression using the tensor techniques. Let $\eta_k$ be defined as 
\[\eta_k = \left\{ \begin{array}{ll} {k} & 1 \leq k  \leq s \\ {2s-k} & s  < k < 2s-1. \end{array}\right. \]
Then each $\M{E}_k$ is a Hankel matrix with $0$'s everywhere except for indices where $i+j-1 = k$, where it takes the value $\sqrt{\eta_k}$ and
\[ \mc{E} = ( \M{E}_1,\ldots,\M{E}_{2s-1} ) \qquad \mc{A} = (\M{h}_1,\dots,\M{h}_{2s-1} ).  \]
Therefore, the tensor $\cTT_{\mc{E}}[\MH_s]_{:k:} = \sqrt{\eta_k}\twist(\M{h}_k)$ for $k=1,\dots,2s-1$.

We compute a Tucker decomposition of multi rank $(r_1,r_2,r_3)$ of the form 
\[ \cTT_{\mc{E}}[\MH_s] \approx  \TTT = \widehat{\cTT}_{\mc{E}}[\MH_s]  = \T{G} \times_1 \MU \times_2 \M{V} \times_3 \M{W}, \]
where $\T{G} = \TTT \times_1  \M{U}^\top \times_2\M{V}^\top \times_3\M{W}^\top$. This low-rank decomposition is computed using the HOSVD algorithm. In this example problem the number of outputs and inputs are $m=155$, $n = 50$ respectively, and we take $s = 100$. The size of the tensor $\cTT_{\mc{W}}[\MH_s]$ is, therefore, $155 \times 199\times 50.$ In Figure~\ref{fig:tuckerera}, we plot the singular value decay in each mode-unfolding. We observe that the singular values decay in all three mode-unfoldings, with a very sharp drop in mode-2.  We can then write the approximate block-structured matrix as 
\[ \widehat{\MH}_s = (\M{I}\otimes \M{U}) \struct\left\{ \sum_{j=1}^{r_2} v_{kj} \sq(\T{G}_{:,j,:}) \right\} _{k=1}^{2s-1} (\M{I}\otimes \M{W}^\top).  \]

\begin{figure}[!ht]
    \centering
    \includegraphics[scale=0.4]{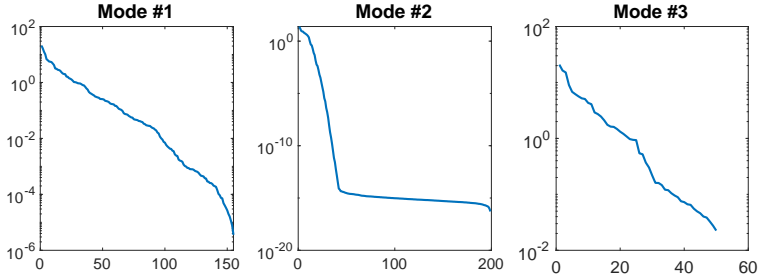}
    \caption{This plot shows the singular value decay of each mode unfolding of $\cTT[\MH_s]$, corresponding to the power system application problem.}
    \label{fig:tuckerera}
\end{figure}

To perform system identification, it is sufficient to work with the SVD of 
\[\widehat{\MH}_s^\text{Tucker} = \struct\left( \sum_{j=1}^{r_2} v_{kj} \sq (\TG_{:,j,:}) \right)_{k=1}^{2s-1},\] 
since the SVD of $\widehat{\MH}_s$ follows in a straightforward manner. Since $\widehat{\MH}_s^\text{Tucker}$ is still in the block-Hankel format, we can take advantage of its structure to efficiently compute its dominant singular values. In particular, we apply the Randomized ERA algorithm~\cite[Section 3.1]{minster2020efficient} to $\widehat{\MH}_s^\text{Tucker}$. We call this the `TuckerRandERA' approach. This has a computational cost $\mc{O}(r_1r_3s\log_2 s)$ compared to $\mc{O}(mn\min\{m,n\}s^3)$ using the naive approach. Similarly, the storage cost of the Tucker approach is $\mc{O}(r_1r_3s)$ but for the naive approach this is $\mc{O}(mns^2)$ (since the entire matrix $\MH_s$ needs to be stored). 

\begin{figure}[!ht]
    \centering
    \includegraphics[scale=0.4]{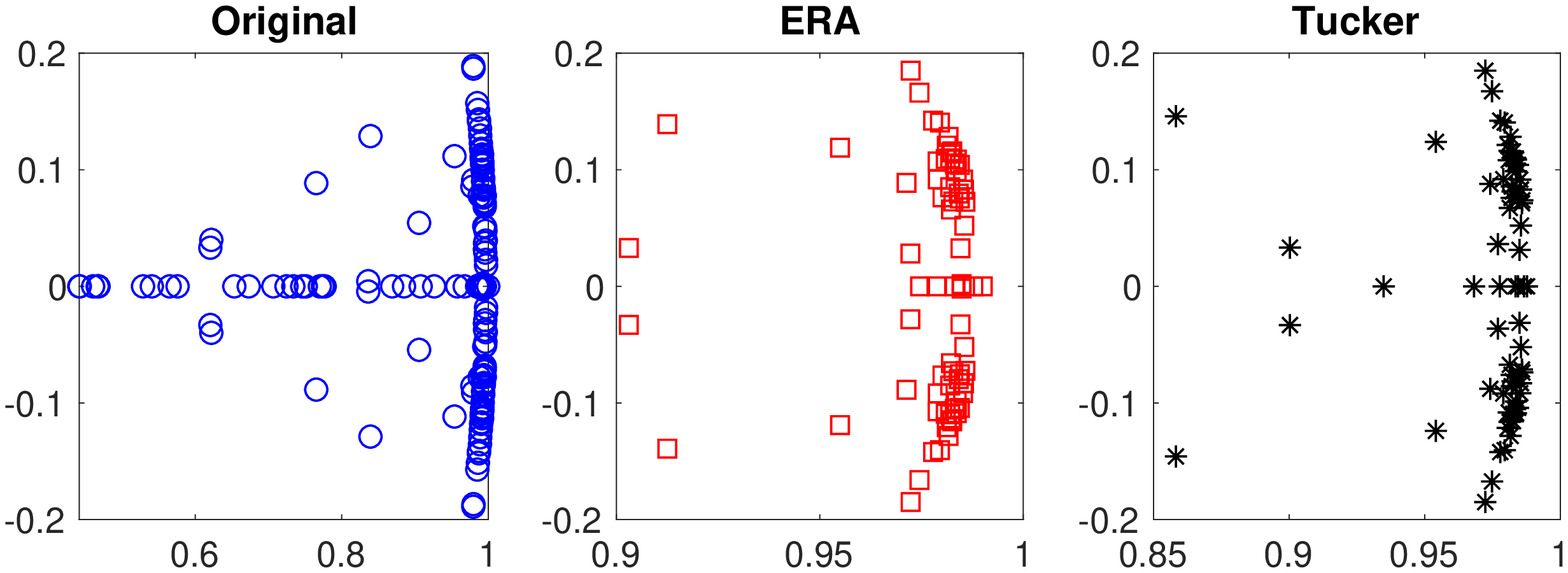}
    \caption{(Left) displays the eigenvalues of the original system, (middle) displays the eigenvalues using ERA, and (right) displays the realized eigenvalues computed using our TuckerRandERA approach.}
    \label{fig:tuckereigs}
\end{figure}

In the following computational experiment,  we pick the target rank $(56,30,30)$ based on the decay of the singular values of the mode unfolding (see Figure~\ref{fig:tuckerera}). This is a significant compression considering the size of the original tensor is $155\times 199\times 50$. Based on~\cite{minster2020efficient}  the target rank for the ERA was chosen to be $r=75$. In Figure~\ref{fig:tuckereigs}, we plot the eigenvalues of $\M{A}$. The left panel displays the eigenvalues of the original system, the middle panel displays the eigenvalues using ERA, and the right panel displays the realized eigenvalues computed using the TuckerRandERA approach. As is seen from the plot, the eigenvalues computed both using the ERA and the TuckerRandERA approach are in good agreement, from a visual perspective. The Hausdorff distance between the eigenvalues identified using ERA and TuckerRandERA is approximately $5.47\times 10^{-2}$ (see Section 5 in~\cite{minster2020efficient} for an explanation of this measure of accuracy). The accuracy improves significantly if a larger value of $s$ is used. The run time for ERA was approximately $68$ seconds, and the time for the TuckerRandERA approach is approximately $0.79$ seconds (including precomputation of the Tucker decomposition and averaged over three runs). Both the accuracy and the computational benefits improve significantly if a larger $s$ is used.

\paragraph{Relation to TERA} A related approach proposed in the literature is Tangential Interpolation ERA (TERA)~\cite{kramer2016tangential}. In our notation, the TERA approximation to $\MH_s$ is 
\[ \MH_s^\text{TERA} = (\MI \otimes \MU) \struct\left( \widetilde{\M{h}}_k\right)_{k=1}^{2s-1}  (\MI \otimes \MW^\top), \]
where $\widetilde{\M{h}}_k = \MU^\top\M{h}_k\MW = \MU^\top\MCC\MA^{k-1}\MB\MW$, are the projected Markov parameters. We construct the tensor $\cTT[\MH_s]_{:k:} = \twist(\M{h}_k)$ with the Markov parameters as the lateral slices. Note that this tensor is constructed in a slightly different way than in Section~\ref{sec:mattotens}, where the lateral slices are weighted by the frequency with which they appear in the block matrix. It is relatively straightforward to show that this corresponds to a Tucker-2 approximation of $\cTT[\MH_s]$ of the form 
\[ \cTT[\MH_s] \approx \widehat{\cTT}[\MH_s]  = \T{G} \times_1 \MU \times_2 \M{I} \times_3 \M{W}, \]
where $\T{G} = \cTT[\MH_s]\times_1  \M{U}^\top  \times_3\M{W}^\top$.

The following facts are worth pointing out regarding this connection to TERA. First, the structure of TERA is similar to that obtained using the Tucker approach described in this section if $\M{V} = \M{I}$; that is, we compress along modes 1 and 3 only. Second, the computational cost of TERA and using the Tucker approach is comparable; since both algorithms preserve the block-Hankel structure, they can be accelerated using the Randomized ERA approach. Third, using the Tucker decomposition across all three modes enables us to use additional structure that the TERA that is, presumably, neglecting. Finally, the TERA approximation treats all the Markov parameters ``equally'' and does not consider the frequency of occurrence in the block matrix $\MH_s$.

\subsection{Space-time covariance matrices}\label{ssec:spacetime}
In geostatistical applications, it is important to represent the unknown quantities that vary over space and time by random processes~\cite{gneiting2006geostatistical}. Gaussian random processes are often used in this context and are completely characterized by the mean and covariance functions. Consider the Gaussian random field $Z(\V{x},t)$; assuming that it has a zero mean and finite second moment, we can represent the covariance function between two points in space-time as
\[ C((\V{x}_1,t_1), (\V{x}_2,t_2)) = \text{Cov}(Z(\V{x}_1,t_1),Z(\V{x}_2,t_2)). \]
Depending on the the type of covariance function used and the number of points in space and time (e.g., nonseparable covariance function), the resulting space-time covariance matrices can be large, dense, and infeasible both from a storage and computational perspective.  An accurate and efficient approximation of the resulting space-time covariance matrix is highly desirable in many applications. We consider the class of covariance matrices that are weakly stationary and isotropic, so that 
\[ C((\V{x}_1,t_1), (\V{x}_2,t_2)) = \varphi(\|\V{x}_1-\V{x}_2\|_2, |t_1-t_2|).\]

Let $\{\V{x}_j\}_{j=1}^N$ be a set of points in space and $\{t_j\}_{j=1}^T$ be a set of points in time. The points in space are allowed to be unstructured, but the points in time are considered to be equidistant. Consider the space-time covariance matrix $\M{C} \in \mb{R}^{(NT)\times (NT)}$ which is block Toeplitz
\[ \M{C} = \bmat{ \M{C}_1 & \M{C}_2 & \dots & \M{C}_T \\ \M{C}_2 & \M{C}_1 & \dots & \M{C}_{T-1} \\  \vdots & \ddots & \ddots & \vdots \\ \M{C}_{T} & \dots & \M{C}_{2} &  \M{C}_1}\in \mb{R}^{(NT)\times (NT)}, \]
where the blocks $\M{C}_i \in \mb{R}^{N\times N}$ have the entries
\[ [\M{C}_i]_{jk} = \varphi(\|\V{x}_j-\V{x}_k\|_2, |t_1-t_i|) \qquad j,k=1,\dots,N. \]
We consider the block low-rank approximation as proposed in Section~\ref{sec:blockstructured}; however, since covariance matrices are symmetric positive (semi) definite, when constructing an approximation it is important to ensure that it preserves definiteness.

The mapping to tensors has already been described in Example~\ref{ex:btoep} of Section~\ref{sec:mattotens} and we do not repeat it here. Note that $\ell = q = T$ and $m = n = N$. To construct the low-rank approximation, we follow the approach in Section~\ref{ssec:spsd}, and compute the rank-$(r,T,r)$ Tucker decomposition of $\cTT_{\mc{E}}[\M{C}] \in \mb{R}^{N\times T\times N}$ to obtain 
\[  \cTT_{\mc{E}}[\M{C}] \approx  \TT := \T{G} \times_1 \M{U} \times_3 \M{U} .  \]
To compute this decomposition, because of the symmetry of the problem, we only need to compute the left singular vectors $\M{U} \in \mb{R}^{N\times r}$ of the mode-1 unfolding of $\cTT_{\mc{E}}[\M{C}]$ (corresponding to the $r$ dominant singular values) and then compute the core tensor $\T{G} := \cTT_{\mc{E}}[\M{C}]\times_1 \M{U}^\top \times_3 \M{U}^\top$. If the SVD is used to compute $\M{U}$, then the cost is $\mc{O}(TN^3)$, but can be lowered to $\mc{O}(rN^2T)$ using randomized SVD~\cite{halko2011finding}. This cost can be further lowered to $\mc{O}(rNT\log N)$ using the $\mc{H}$-matrix approach~\cite{litvinenko2020hlibcov}, but we do not pursue this here. 
We then obtain the block-structured approximation to $\M{C}$ as 
\[ \widehat{\M{C}} = \mc{M}_{\mc{E}} [\TTT] =  (\MI\otimes \M{U}) \struct\left(  \M{U}^\top\M{C}_k\M{U} \right)_{k=1}^T(\MI\otimes \M{U}^\top). \]
The block-structured approximation $\widehat{\M{C}}$ is efficient both in terms of storage and computation. The cost of storing $\widehat{\M{C}}$ implicitly is $\mc{O}(rN + r^2T).$  In contrast, the original matrix has the storage cost $\mc{O}(N^2T)$, if only the blocks $\M{C}_j$ are stored. The cost of a matvec with $\widehat{\M{C}}$ is $\mc{O}(rNT+ r^2T^2)$, whereas a matvec with $\M{C}$ can be computed in $\mc{O}(N^2T^2)$. Both of these costs can be lowered, if we exploit the block-Toeplitz structure; in particular, we can reduce the factor $T^2$ to $T\log T$. 
\begin{figure}[!ht]
    \centering
    \includegraphics[scale=0.5]{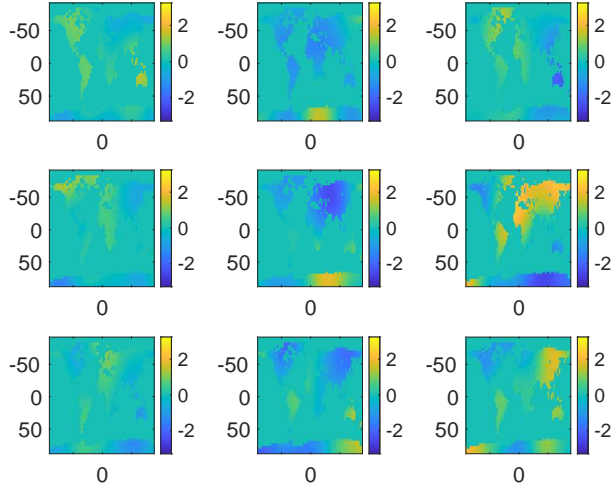}
    \caption{Samples from the distribution $\mc{N}(\V{0},\widehat{\M{C}}+\delta \M{I})$. The rows correspond to the time indices $1,5,15$ and the columns correspond to three different samples. }
    \label{fig:spacetime}
\end{figure}

In our first experiment, we check the accuracy of the block-structured approximation. We pick the covariance kernel corresponding to the function 
\[ \varphi(r,\tau) = \exp\left(-((r/90)^2 + (\tau/0.5)^2)\right).\] 
We divide the world map into a $4^\degree \times 4^\degree$ grid, the points $\V{x}_j$ correspond to the land masses. In this particular instance, we have $N=1351$ points and we choose $T=30$ evenly spaced time points in the interval $[0,1]$. To check the accuracy of the compression, we use the following relative error
\[ \text{relerr} = \frac{|\trace(\M{C}) -\trace(\widehat{\M{C}})|}{\text{trace}(\M{C})}.\]
Since $\M{C}$ cannot be formed explicitly, this metric is beneficial since it is easy to compute. Observe that $\trace(\M{C}) = NT$ and $\trace(\widehat{\M{C}}) = T\trace(\M{U}^\top\M{C}_1\M{U}).$ When $r=30$, we obtain the relative error relerr $\approx 3.35 \times 10^{-4}$, but the compression ratio
\[ \text{ratio}_\text{storage} = \frac{r^2T + Nr}{N^2T} \approx 0.002. \]
In other words, at only a fraction of the storage costs, we obtain an approximation to $\M{C}$ that is reasonably accurate. In Figure~\ref{fig:spacetime}, we plot samples from $\mc{N}(\V{0},\widehat{\M{C}}+\delta \M{I})$, where $\delta = 10^{-8}$ is known as the nugget parameter added to ensure positive definiteness. The samples were computed using the Lanczos approach~\cite{chow2014preconditioned}. 

\subsection{Test Matrices from SuiteSparse Collection}\label{ssec:blocktrid} In this example, we consider a set of sparse block-tridigonal matrices from  the SuiteSparse collection of matrices~\cite{kolodziej2019suitesparse}. The tuples $\mc{A}, \mc{E}$ are described in Example 2 of \Cref{sec:mattotens}, so we will not repeat the matrix-to-tensor mappings. Note that $\ell  = q$,  $m=n$, and $p = 3\ell - 2$. For each matrix $\M{A}$, we produce a Kronecker sum approximation by first computing a compressed representation of the form
\[ \T{T} =  \T{G} \times_2 \M{V} \qquad \T{G} = \T{X} \times_2\M{V}^\top,\]
where $\T{X} = \cTT_{\mc{E}}[\M{A}]$ and $\M{V} \in \mb{R}^{p \times r}$ so that $\TG \in \mathbb{R}^{n \times r \times n}$. 
Here $r$ is the target rank of the compression. We chose to compress along mode-2 since for each example, the singular values of the mode-2 unfolding decayed rapidly. The matrix $\M{V}$ is computed using the first $r$ left singular vectors of $\T{X}_{(2)}$. 

\begin{table}[!ht]
    \centering
    \begin{tabular}{l|c|c|c|c|c}
        Name & $\ell$ &  $n$ & Target rank $r$ & Relative Error & Compression   \\ \hline
        \verb|pde2961| & $63$ & $47$ & $20$ & $8.36\times 10^{-10}$ & $0.4470$ \\
        \verb|t2d_q4| & $99$ & $99$ & $5$ & $2.93\times 10^{-15}$ & $0.034$\\
        \verb|t2d_q9| & $99$ & $99$ & $5$ & $2.93\times 10^{-15}$& $0.034$\\ 
        \verb|fv2| & $99$ & $99$ & $5$ & $2.28\times 10^{-15}$ & $0.034$\\ 
        \verb|chem_master1| & $201$ & $201$ & $5$ & $1.79\times 10^{-15}$ & $0.030$\\
        \verb|ecology1| (*) &$500$ & $1000$ & $5$ & $6.10\times 10^{-15}$ & $ 0.009$ 
    \end{tabular}
    \caption{For each test problem, we report the name of the matrix, the number of block rows $\ell$, the size of each block $n$, the target rank used, the relative error, and the compression ratio. For the \texttt{ecology1} test case, we used the leading principal submatrix of size $500000\times 500000$.}
    \label{tab:kronsum}
\end{table}

The compressed representation is then converted to a Kronecker sum approximation of the form $\M{A} \approx \Mhat{A} = \sum_{j=1}^r \M{C}_j \otimes \M{D}_j$, where
\[   \M{C}_j = \struct (v_{kj} )_{k=1}^p \in \mb{R}^{\ell \times \ell}, \quad \M{D}_j =  \sq(\T{G}_{:,j,:}) \in \mb{R}^{n\times n}.\]
Note here that $\M{C}_j$ is a tridiagonal matrix, so the resulting approximation $\Mhat{A}$ is also block tridiagonal. Furthermore, note that for the chosen test problems, since the matrices $\M{A}_j$ for $j=1,\dots,p$ are tridiagonal, the matrices $\M{D}_j$ for $j=1,\dots,r$ are also tridiagonal matrices as such structure will be preserved in the lateral slices of $\TG$. This makes the resulting Kronecker sum approximation efficient to store, providing $r \leq p$.

In \cref{tab:kronsum}, we list the name of the matrix, the relevant problem dimensions, the target rank, and the resulting relative error in the Frobenius norm $\|\M{A}-\Mhat{A}\|_F/\|\M{A}\|_F$. We also report the compression ration
\[ \text{ratio}_\text{storage} = \frac{pr + \text{nnz}(\T{G})}{\text{nnz}(\MA)}.\]
For the test problem \verb|ecology1|, we made two modifications to reduce the runtime of the problem: first we used the leading principal submatrix of size $500000\times 500000$; second, to compute the matrix $\M{V}$, we use a randomized approach~\cite{minster2020randomized}. That is, we compute the $\M{V}$ to be the left singular vectors of the mode-2 unfolding of the tensor $\M{Y} = \T{X} \times_1 \M{\Omega} \times_3 \M\Psi$. Here $\M\Omega,\Psi \in \mb{R}^{n\times r}$ are chosen to be independent standard Gaussian random matrices. In \cref{tab:kronsum}, it is seen that in each we obtain a Kronecker sum approximation that is accurate, efficient to compute with and to store.

\section{Multilevel block-structure}\label{sec:multilevel}
The discovery of latent tensor structure in the block matrices can be extended to multiple levels of block structure. As a motivating example, consider a Block Toeplitz matrix with Toeplitz blocks (BTTB matrix). Under the current formulation, we would represent this as a three-dimensional tensor with Toeplitz matrices as lateral slices, but ignore the Toeplitz structure within each lateral slice. We now develop a formulation that is capable of handling structure on every level on which it exists.  
\subsection{Multilevel structured approximation}
Let $\M{A} \in \mb{R}^{M\times N}$ with $L$ levels of block structure, where $M = m \left(\prod_{j=1}^L\ell_j\right)$ and $N = n \left(\prod_{k=1}^L q_j\right)$.  That is, at level $1$, the matrix $\M{A}$ has $\ell_1 \times q_1 $ blocks of size $\left(M/\ell_1 \right)\times \left(N/q_1\right)$; at level $L$, the matrices are of size $m\times n$.  Note that in our model, the type of block structure (e.g., banded, Toeplitz, Hankel, etc.) at each level is fixed.  However, the type of block structure can vary across levels.   

\begin{figure}[!ht]
\centering
\begin{minipage}[c]{1.75in}

\begin{tikzpicture}[scale=0.3]
 \tikzmath{\h = 3; \w = 1;}
\tikzmath{\l = 0; \t = 3*\h; }
\foreach \x in {0,\h,3*\h}{
\filldraw[fill=blue, draw=black] (\l+ \x, \t - \x -\h ) rectangle ++(\h,\h);
}
\foreach \x in {\h,2*\h}{
\filldraw[fill=blue!50, draw=black] (\l+ \x, \t - \x) rectangle ++(\h,\h);
\filldraw[fill=blue!50, draw=black] (\l+ \x-\h , \t - \x -\h) rectangle ++(\h,\h);
}

\filldraw[fill=blue!20, draw=black] (\l+2*\h, \t -1*\h) rectangle ++(\h,\h);
\filldraw[fill=blue!20, draw=black] (\l, \t -3*\h) rectangle ++(\h,\h);

\filldraw[fill=blue, draw=black](2*\h, 0)
rectangle ++(\h,\h); 

\filldraw[fill=blue!50, draw=black](2*\h,-1*\h)
rectangle ++(\h,\h);

\filldraw[fill=blue!50, draw=black](3*\h,0)
rectangle ++(\h,\h);

\filldraw[fill=blue!20, draw=black](1*\h,-1*\h)
rectangle ++(\h,\h);
\filldraw[fill=blue!20, draw=black](3*\h,1*\h)
rectangle ++(\h,\h);
\draw (3*\h,2*\h)
rectangle ++(\h,\h);
\draw (0,-1*\h)
rectangle ++(\h,\h); 
\end{tikzpicture}
\end{minipage} 
\begin{minipage}[c]{4in}
 	 \[ \MA_i = \begin{bmatrix} \MA^{(i,1)} & 0 & 0 & 0 & 0 \\
		0 & \MA^{(i,2)} & 0 & 0 & 0\\ 0 & 0 & \MA^{(i,3)} & 0 & 0 \\ 0 & 0 & 0 & \MA^{(i,4)} & 0 \\ 0 & 0 & 0 & 0 & \MA^{(i,5)}   \end{bmatrix} \qquad i = 1,2,3 \]
		\end{minipage}
\caption{Example where $L=2$, $l_1=q_1 = 4$, $l_2 = q_2 = 5$; The level 1 structure is block-symmetric Toeplitz (top row), the level 2 structure is block diagonal (bottom row). Each colored square on the left denotes a block diagonal matrix $\MA_i$, where  $i=1,2,3$ represents the non-redundant information on the first level; each subblock $\MA^{(i,j)}$, $j=1,\ldots,5$ is $m \times n$.} 
\label{fig:exmultilevel}
\end{figure}

\begin{example} Consider a $4 \times 4$ block-pentadiagonal Toeplitz matrix block symmetry and each block is itself a $5 \times 5$ block-diagonal matrix such that each block on the block diagonal is an $m \times n$ matrix with no additional structure to exploit (see \cref{fig:exmultilevel}).  In this example, $L=2$, $\ell_1 = q_1 = 4$, $\ell_2=q_2=5$.  The total dimension of the matrix is $m( 4 \times 5) \times n (4 \times 5)$, but because of the outer block Toeplitz structure and symmetry, the matrix can be represented by storing a total of $15=3 \times 5$, $m \times n$ matrices. However, we can keep those $15$ $m \times n$ matrices in a 4th order, $m \times 3 \times 5 \times n$ tensor.          
\end{example}

\subsection{Structured matrix to tensor mapping} In general, we can then write 
\[ \M{A} = \sum_{i_1=1}^{p_1} \cdots \sum_{i_L=1}^{p_L} \M{E}_{i_1}^{(1)} \otimes \cdots \otimes
\M{E}_{i_L}^{(L)} \otimes 
\sqrt{\eta_{i_1}^{(1)}\cdots\eta_{i_L}^{(L)} }
\M{A}^{(i_1,\dots,i_L)} ,\]
where $\MA^{(i_1,\dots,i_L)}$ are the $m \times n$ non-redundant blocks at level $L$.  A similar setup was also used in~\cite{pestana2019preconditioners}. The matrices $\M{E}_{k}^{(j)} \in \mb{R}^{\ell_j\times q_j}$, with entries in $\{0,\frac{1}{\sqrt{\eta_k^{(j)}}} \}$, represent the different mapping matrices at level $j$, where $1 \leq j \leq L$. At each level $j$, there are $p_j$ such matrices where $1 \leq p_j \leq \ell_jq_j$. Define the tuple of mapping matrices
\[ \mc{E}^{(j)} = \left( \M{E}_{1}^{(j)},\dots, \M{E}_{p_j}^{(j)}\right) \qquad j = 1,\dots,L, \]
and $\mc{E} = \left( \mc{E}^{(1)}, \dots, \mc{E}^{(L)}\right).$   
For completeness, we note that in our running example, $p_1=3, p_2 = 5$, $\eta^{(1)}_1=4, \eta^{(1)}_2 = 3, \eta^{(1)}_3 = 2$ and $\eta^{(2)}_{i_2} = 1, i_2 = 1,\ldots,5$.

We can express the blocks as lateral slices of a tensor  $\cTT_{\mc{E}}[\M{A}] \in \mb{R}^{m\times p_1 \times \dots  \times p_L \times n}$, of order $(L+2)$, such that\footnote{We will have to choose an ordering for the indicies. This description is independent of the ordering, as long as it is consistent.  For example, one could use lexicographical, first running over $i_1$, with all others fixed, then $i_2$ etc.}   
\[ \TX_{:,i_1,\dots,i_L,:} = 
\twist(\M{A}^{(i_1,\dots,i_L)}). \]

In our running example, this means that $\TX_{i,j,k,\ell} = \MA^{(j,k)}_{i,\ell}$.  
Consistent with the third order case, we express the mode-$(L+2)$ unfolding as\footnote{Consistent with ordering first in $i_1$, then $i_2$, etc.} 
\[  \TX_{(L+2)} = [\sq(\TX_{:,1,\ldots,1,:})^\top,
\sq( \TX_{:,2,1,\ldots,1,:})^\top, \cdots, 
\sq( \TX_{:,p_1,\ldots,p_L,:})^\top ].  \]

The tensor-to-matrix mapping $\mc{M}_{\mc{E}}: \mb{R}^{m\times p_1 \times \dots \times p_L \times n} \rightarrow \mb{R}^{M\times N}$ is 
\[\mc{M}_{\mc{E}}[\TX] =   \sum_{i_1=1}^{p_1} \cdots \sum_{i_L=1}^{p_L} \M{E}_{i_1}^{(1)} \otimes \cdots \otimes \M{E}_{i_L}^{(L)} \otimes \sq(\TX_{:,i_1,\dots,i_L,:}) .\] 
It is easy to verify that $\mc{M}_{\mc{E}}[\cTT_{\mc{E}}[\M{A}]] = \M{A}.$

In the next section, we will show how this new framework encapsulates some special cases mentioned in the literature.

\subsection{Approximation error} Suppose we construct a low-rank tensor approximation $\widehat{\cTT}_{\mc{E}}[\M{A}]$. The next result generalizes \cref{lem:tenapprox} to the multilevel case and relates the error in the tensor approximation to the error in the block-matrix approximation in the Frobenius norm.
\begin{lemma} \label{lem:multiapprox} Let $\MA$, $\mc{M}_{\mc{E}}$, and $\mc{T}_{\mc{E}}$ be defined as in this section, and let $\widehat{\cTT}_{\mc{E}}[\M{A}] \approx {\cTT}_{\mc{E}}[\M{A}]$. The error in $\mc{M}_{\mc{E}}[\widehat{\cTT}_{\mc{E}}[\M{A}]]$ satisfies
\[ \| \M{A} - \mc{M}_{\mc{E}}[\widehat{\cTT}_{\mc{E}}[\M{A}]]\|_F  = \|\cTT_{\mc{E}}[\M{A}]- \widehat{\cTT}_{\mc{E}}[\M{A}]\|_F\]
\end{lemma}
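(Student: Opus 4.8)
The plan is to follow the single-level argument in the proof of \cref{lem:tenapprox} essentially verbatim, the only new ingredient being that the multilevel pattern matrices $\M{E}_{i_1}^{(1)} \otimes \cdots \otimes \M{E}_{i_L}^{(L)}$ remain mutually orthogonal in the trace inner product; once this is established, the Kronecker-product norm identities collapse both sides onto the tensor slices. First I would write the error matrix explicitly. Since $\M{A} = \mc{M}_{\mc{E}}[\cTT_{\mc{E}}[\M{A}]]$, subtracting the two instances of the tensor-to-matrix map gives
\[ \M{A} - \mc{M}_{\mc{E}}[\widehat{\cTT}_{\mc{E}}[\M{A}]] = \sum_{i_1=1}^{p_1} \cdots \sum_{i_L=1}^{p_L} \M{E}_{i_1}^{(1)} \otimes \cdots \otimes \M{E}_{i_L}^{(L)} \otimes \M{D}_{i_1,\dots,i_L}, \]
where $\M{D}_{i_1,\dots,i_L} := \sq(\cTT_{\mc{E}}[\M{A}]_{:,i_1,\dots,i_L,:}) - \sq(\widehat{\cTT}_{\mc{E}}[\M{A}]_{:,i_1,\dots,i_L,:})$ is the squeezed slice-wise difference.

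The main step---and the only place the multilevel structure does real work---is to show that the cross terms vanish when $\|\cdot\|_F^2$ is expanded over the multi-index. Using $(\M{P}\otimes\M{Q})^\top(\M{R}\otimes\M{S}) = (\M{P}^\top\M{R})\otimes(\M{Q}^\top\M{S})$ and $\trace(\M{P}\otimes\M{Q}) = \trace(\M{P})\trace(\M{Q})$ repeatedly, I would show that the trace inner product of two full Kronecker patterns factors across levels as $\prod_{j=1}^{L} \langle \M{E}_{i_j}^{(j)}, \M{E}_{i_j'}^{(j)}\rangle$. The key observation is that, at each fixed level $j$, the matrices $\{\M{E}_k^{(j)}\}_{k=1}^{p_j}$ have pairwise disjoint supports, since each marks the positions of one distinct block at that level and every grid position holds exactly one block type; hence $\langle \M{E}_{i_j}^{(j)}, \M{E}_{i_j'}^{(j)}\rangle = 0$ whenever $i_j \neq i_j'$, exactly as in the single-level remark. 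Consequently, for two distinct multi-indices there is some level at which they differ, the product vanishes, and only the diagonal terms survive.

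For those diagonal terms I would apply the identity $\|\M{P}\otimes\M{Q}\|_F = \|\M{P}\|_F\|\M{Q}\|_F$ a total of $L$ times, together with the normalization $\|\M{E}_k^{(j)}\|_F = 1$---which holds because block $k$ at level $j$ occupies exactly $\eta_k^{(j)}$ positions, each contributing $1/\eta_k^{(j)}$, mirroring \eqref{eq:fnormw}. Each diagonal term then reduces to $\|\M{D}_{i_1,\dots,i_L}\|_F^2$, so that
\[ \|\M{A} - \mc{M}_{\mc{E}}[\widehat{\cTT}_{\mc{E}}[\M{A}]]\|_F^2 = \sum_{i_1=1}^{p_1}\cdots\sum_{i_L=1}^{p_L} \|\M{D}_{i_1,\dots,i_L}\|_F^2. \]
Finally, since squeezing merely reshapes and therefore preserves the Frobenius norm, and since the squared tensor norm is the sum of squared lateral-slice norms, the right-hand side is precisely $\|\cTT_{\mc{E}}[\M{A}] - \widehat{\cTT}_{\mc{E}}[\M{A}]\|_F^2$; taking square roots gives the claim. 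I expect the support-disjointness bookkeeping underlying the orthogonality to be the only subtle point, as the remainder is a routine application of the Kronecker identities recorded in \cref{sec:background}.
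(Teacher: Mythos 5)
Your proposal is correct and is exactly the argument the paper intends: its proof of \cref{lem:multiapprox} is the one-line statement that the result ``follows from an iterative application of the technique in the proof of \cref{lem:tenapprox},'' and your write-up simply spells out that iteration---level-wise disjoint supports of the $\M{E}_{k}^{(j)}$ giving orthogonality of the full Kronecker patterns, multiplicativity of the Frobenius norm over Kronecker factors, and the normalization $\|\M{E}_k^{(j)}\|_F=1$ collapsing each diagonal term onto the corresponding slice difference. No gaps; the support-disjointness point you flag is indeed the only substantive ingredient beyond the single-level proof.
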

\begin{proof}
The proof follows from an iterative application of the technique in the proof of \cref{lem:tenapprox}.
\end{proof}
Once again, we see that the error in the Frobenius norm in the matrix approximation is precisely equal to the error in the Frobenius norm in the tensor approximation and that the error is independent of the tensor compression technique or the format that is used. We now show how to convert the compressed representation in the Tucker format to obtain a sum of Kronecker products.

\subsection{Structured matrix approximation} As in \cref{sec:kron}, a low rank approximation to ${\cTT}_{\mc{E}}[\M{A}] $ can be computed either in the CP or Tucker format, using any appropriate low-rank approximation algorithm. Let us denote $\TX = {\cTT}_{\mc{E}}[\M{A}]$.  Suppose we now use a  Tucker approximation 
\begin{equation} \label{eq:multiTX} \TX \approx {\TTT} = \T{G} \times_1 \M{X} \left(\bigtimes_{j=2}^{L+1} \M{Y}^{(j-1)}\right) \times_{L+2} \M{Z}, \end{equation}
where the factor matrices $\M{Y}^{(j)} \in \mb{R}^{p_j\times k_j}$ where $k_{j} \le p_{j}$, the factor matrix $\MX \in \mb{R}^{m\times k_x}$ and the factor matrix $\MZ\in\mb{R}^{n\times k_z}$.

Then comparing the mode-$(L+2)$ unfoldings, 
\[ \TX_{(L+2)} \approx \MZ \T{G}_{(L+2)} (\MY^{(L)})^\top \otimes \cdots (\MY^{(1)})^\top \otimes \MX^\top.  \]
It follows that 
\[ \sq (\TX_{:,i_1,\ldots,i_L,:})^\top \approx
\MZ \sum_{j_1=1}^{k_1} \cdots 
\sum_{j_L=1}^{k_L} \MY^{(L)}_{i_L,j_L} \MY^{(L-1)}_{i_{L-1},j_{L-1}} \cdots \MY^{(1)}_{i_1,j_1} \sq( \T{G}_{:,j_1,\ldots,j_L,:} )^\top \MX^\top.  \]

This gives our matrix approximation as
\begin{equation} \label{eq:matrixTX} \mc{M}_{\mc{E}}[{\TTT}] = (\M{I} \otimes \M{X}) \left( \sum_{i_1=1}^{p_1} \cdots \sum_{i_L=1}^{p_L} \bigotimes_{j=1}^L\M{E}_{i_j}^{(j)}  \otimes \M{H}^{(i_1,\dots,i_L)} \right) (\M{I} \otimes \M{Z}^\top),\end{equation}
where 
\begin{equation} \label{eq:defineH} \MH^{(i_1,\ldots,i_L)} = {\sum_{j_1=1}^{k_1}\dots }\sum_{j_L=1}^{k_L} \MY^{(L)}_{i_L,j_L} \MY^{(L-1)}_{i_{L-1},j_{L-1}} \cdots \MY^{(1)}_{i_1,j_1} \sq( \T{G}_{:,j_1,\ldots,j_L,:})
.\end{equation}
Inserting \cref{eq:defineH} into \cref{eq:matrixTX}, and simplifying, we arrive at the approximation
\[ \mc{M}_{\mc{E}}[{\TTT}] =  
\sum_{j_1=1}^{k_1} \cdots \sum_{j_L = 1}^{k_L} 
 \bigotimes_{t=1}^L
  \mathsf{struct}_{\mc{E}^{(t)}}(\MY^{(t)}_{:,j_t}) \otimes 
   \left(\MX \sq( \T{G}_{:,j_1,\ldots,j_L,:} )\MZ^\top\right) . \]

If the number of columns, $k_x$, in $\MX$ is less than $m$ and/or the number of columns $k_z$, in $\MZ$ is less than $n$, this approximation obviously has lower rank than $\MA$. Note that the slices $\sq (\T{G}_{(:,j_1,j_2,\ldots,j_L,:)})$ are of size $k_x \times k_z$.  

\subsection{Use Case: Relation to Previous Literature}
The framework we have described here subsumes, and in some cases enhances, some special cases that have been described elsewhere in the literature.
For example, we can show that our method can be used to derive exactly the same optimal Kronecker decomposition as in
\cite{KammNagy} for BTTB matrices by taking $L = 2$, and $m=n=1$ above.

In this section, we use our framework to improve on the results in \cite{nagy2006kronecker}.  In that paper, the authors consider a banded/block-banded triply Toeplitz matrix with Dirichlet boundary conditions on all three levels,  whose `central' column is determined from a 3D discrete point-spread function (PSF).  Recognizing that the 3D PSF is a third-order tensor, they use third-order tensor decompositions to determine an approximate operator that is a (sum of) Kronecker products of Toeplitz matrices, and subsequently use the decomposition to
find a preconditioner for the deblurring problem. 

In their approach, the authors did not have a result that related the tensor approximation problem directly to matrix approximation problem.  We now show that using our framework, we can relate a (5th order) tensor approximation problem comprised of entries from a 3D {\it weighted} PSF, and tie these approximation problems together in a natural way, thus improving on the approach in \cite{nagy2006kronecker}.

To simplify discussion, we will assume that the blurring operator is applied to a $K \times K \times K$ image, where $K$ is odd.  Thus the matrix is an $K^3 \times K^3$ blurring operator. The matrix is assumed Toeplitz at each level, and
that the upper and lower bandwidth on all levels is $\frac{K-1}{2}$.  
Because of the 3-level banded Toeplitz structure, the central column is sufficient to specify every entry in the 
blurring operator.  We can therefore consider the central column in the form of a third order tensor PSF, $\TP$, such that the 
$(\frac{K+1}{2},\frac{K+1}{2},\frac{K+1}{2})$ entry corresponds to the entry of the central column on the main diagonal in $\MA$.   The authors
of \cite{nagy2006kronecker} propose to use
a truncated HOSVD $\TP \approx \sum_i \sum_j \sum_k  \gamma_{i,j,k} (\Vu_i \circ \V{v}_j \circ \V{w}_k)$ to form $\MA \approx \sum_{i,j,k} \M{C}_k
\otimes \M{D}_j \otimes \M{F}_i$, where each matrix is Toeplitz.  For example, 
the Toeplitz matrix $\M{C}_k$ is formed using vector $\gamma_{i,j,k}^{1/3} \V{w}_k$ as its central column, and so forth.

The tensor that is decomposed by the process above {\it is} the 3D PSF, with no weighting. This makes sense as a first choice since the entries in the PSF give the entries in the matrix, but it is not guaranteed that the matrix approximation error is equal to the tensor approximation error.  Under our framework, we arrive at the optimally weighted PSF tensor naturally. 
In our framework applied to this problem we have $p_1=p_2=p_3 = N$, $L=3$, $m = n = 1$ so that our tensor is of size $1 \times N \times N \times N \times 1$.  
Specifically, our approach gives\footnote{Note the swapping of the indices in the first and third modes.}
\[ \TX = [\cTT_{\mc{E}}[\MA]]_{1,i_1,i_2,i_3,1} = \sqrt{\eta_{i_1} \eta_{i_2} \eta_{i_3} }  \TP_{i_3,i_2,i_1} , \]
ensuring that our the matrix approximation error is equal to the tensor approximation error in approximating $\TX$, as given in \cref{lem:multiapprox}.  Since two modes are singleton dimensions, our 5th order tensor is effectively a third order tensor, and the matrices $\MX, \MZ$ are just $1 \times 1$ (i.e., $\MX = \MZ = 1$) in \cref{eq:multiTX}.  If $\TX \approx \T{T} = \T{G} \times_1 1 \times_2 \MU \times_3 \MV \times_4 \MW \times_5 1,$ then
                              using \cref{eq:defineH}, we have 
\[ \MH^{(i_1,i_2,i_3)} = \sum_{j_1}^{k_1} \sum_{j_2}^{k_2} \sum_{j_3}^{k_3} u_{i_1,j_1} v_{i_2,j_2} w_{i_3,j_3} \TG_{1,j_1,j_2,j_3,1}. \]

We therefore arrive at the matrix approximation
\[ \mc{M}_{\mc{E}}[\T{T}] = \sum_{j_1=1}^{k_1} \sum_{j_2=1}^{k_2}
\sum_{j_3=1 }^{k_3} \TG_{1,j_1,j_2,j_3,1}  \structLa(\MU_{:,j_1})
\otimes \structLb(\MV_{:,j_2}) \otimes \structLc(\MW_{:,j_3}),
\] 
 and the scalars $\TG_{1,j_1,j_2,j_3,1}$ can be distributed into the Kronecker factors as desired. 
 Thus, while the method in \cite{nagy2006kronecker} and our framework both give a sum of structured Kronecker products, our new approach has the optimal scaling baked into the process to ensure that the corresponding matrix approximation relates directly to the tensor approximation \footnote{The difference in the index correspondence between the two methods has to do with the orientation of the tensor, where we reverse the first and third modes from their definition of $\MP$.}.

\section{Conclusions and Future Work} \label{sec:final}
We have provided a powerful new tensor-based framework for
computing structure-preserving matrix approximations, and demonstrated how the framework can be applied to great effect on some problems in scientific computing.  Additionally, we showed that our framework is flexible enough that it can be used to describe optimal structured matrix approximation problems found in previous literature.  The key design elements of our approach include the invertible matrix-to-tensor mapping followed by a suitable choice of tensor decomposition.  For both the CP and Tucker formats, we showed how the features of the respective tensor approximations map back to the structure in the corresponding matrix approximation.  We proved that the matrix approximations derived using our matrix-to-tensor mapping are as good as the corresponding
tensor approximations, when the error is measured in the Frobenius norm.  Moreover, we proved that it is possible to preserve symmetric positive definiteness in the matrix approximation. 

Although we investigated the details given when using CP and Tucker decomposition, our framework permits the use of any suitable tensor decomposition -- for example a tensor-train approach \cite{oseledets2011tensor} or a non-negativity constrained CP -- in the intermediate stage.  We are currently investigating the extension of the present approach to the case of dense matrices. Another possible area for investigation is whether or not the framework could be modified to handle the case of non-uniform block sizes on a given level.

\bibliography{refs}
\bibliographystyle{abbrv}
\end{document}